\newcommand{\R}{\mathbb{R}}
\theoremstyle{plain}
\newtheorem{theorem}{Theorem}
\newtheorem{lem}{Lemma}
\begin{document}
\title[Distribution of the values of the $L'(s,\chi)$]{Distribution of the values of the derivative of the Dirichlet $L$-functions at its
$a$-points} 
\author{Mohamed Taïb Jakhlouti and Kamel Mazhouda}
\address{Mohamed Taïb Jakhlouti\\
   Faculty of Science of Monastir\\Department of Mathematics\\5000 Monastir\\Tunisia}
   \email{jmedtayeb@yahoo.com}

\address{Kamel Mazhouda\\
   Faculty of Science of Monastir\\Department of Mathematics\\5000 Monastir\\Tunisia}
   \email{kamel.mazhouda@fsm.rnu.tn}


\thanks{\today\\2010 Mathematics Subject Classification: 11M06,11M26, 11M36\\Keywords and Phrases:
Dirichlet $L$-function, $a$-points, value-distribution.\\
The  authors are supported by the Tunisian-French Grant DGRST-CNRS 14/R 1501.}

\maketitle
\begin{abstract}
In this paper, we study the value distribution   of the derivative of a Dirichlet $L$-function  $L'(s,\chi)$ at the  $a$-points $\rho_{a,\chi}=\beta_{a,\chi}+i\gamma_{a,\chi}$ of $L(s,\chi).$ We give an asymptotic formula  for the
 sum $$\sum_{\rho_{a,\chi};\ 0<\gamma_{a,\chi}\leq T}L'\left(\rho_{a,\chi},\chi\right) X^{\rho_{a,\chi}}\ \ \hbox{as}\ \
 T\longrightarrow \infty,$$ where $X$ is a fixed positive number and $\chi$ is a primitive character $\mod q$.
 This work continues the investigations of  Fujii \cite{2,3,4}, Garunk$\check{s}$tis \& Steuding \cite{7} and the authors \cite{12}.
 \vskip 2cm
\end{abstract}

\section{Introduction}
 Let $L(s,\chi)$ be the Dirichlet $L$-function associated with a primitive character $\chi \mod q$ and $a$ be  a nonzero complex number. The zeros of $L(s,\chi)-a,$ which will be denoted by $\rho_{a,\chi}=\beta_{a,\chi}+i\gamma_{a,\chi},$ are called the $a$-points of
$L(s,\chi).$ First, we note that there is an $a$-point near
any trivial zero $s =-2n$  if $\chi(-1)=1$ and $s=-2n-1$ if $\chi(-1)=1$ for sufficiently large $n$. Apart from these $a$-points
there are only finitely many other $a$-points in the half-plane $Re(s)=\sigma\leq0$. The $a$-points with $\beta_{a,\chi}\leq 0$ are said to be trivial. All other $a$-points lie in a strip $0<Re(s)<A,$ where $A$ is a constant depending on $a$; these numbers are called the nontrivial $a$-points. The  number of these $a$-points satisfies  a Riemann-von Mangoldt type formula (we refer to \cite[\S 7.2]{21} for the proof of this formula which is stated for functions in a subclass of the Selberg class including the Dirichlet $L$-functions $L(s,\chi)$), namely
\begin{equation}\label{eq T2 } N_{a,\chi}(T)=\sum_{\substack{\rho_{a,\chi};\ 0<\gamma_{a,\chi}\leq T\\
\beta_{a,\chi}>0}}1 =\frac{T}{2\pi}\log\left(\frac{qT}{2\pi c_a
e}\right)+O(\log T),\end{equation} where $c_a=m$ if $a=1$ and $c_{a}=1$ otherwise, with $m=\min\{n\geq2 :\ \chi(n)\neq0\}.$
Here and in the sequel the error term depends on $q$, however, the main term is essentially independent of
$a$. Moreover, $N_{a,\chi}(T)\sim N_{\chi}(T)$ as $T\rightarrow\infty,$ where $N_{\chi}(T)=N_{0,\chi}(T)$ denotes the number of nontrivial zeros $\rho_{\chi}=\beta_{\chi}+i\gamma_{\chi}$ of $L(s,\chi)$ satisfying  $0<\gamma_{\chi}<T$.
 Gonek \cite{10}  proved that, if the Riemann Hypothesis holds for $L(s,\chi)$, then at least $\left(\frac{1}{2}+o(1)\right)\frac{T}{2\pi}\log\frac{T}{2\pi}$
of the nontrivial $a$-points with ordinates in $(0, T)$ of the function $L(s,\chi)$ associated with a primitive character $\chi$ are simple and lie to
the left of the line $Re(s)= 1/2$.\\

 In this paper, we continue the investigations of  Fujii \cite{2,3,4}, Garunk$\check{s}$tis \& Steuding \cite{7} and the authors \cite{12}. Actually, we are interested in the sums
$$\sum_{\rho_{\chi};\ 0<\gamma_{\chi}\leq T}L'\left(\rho_{\chi},\chi\right) X^{\rho_{\chi}},\ \ \sum_{\rho_{a,\chi};\ 0<\gamma_{a,\chi}\leq T}L'\left(\rho_{a,\chi},\chi\right) X^{\rho_{a,\chi}},$$ where $X$ is a
fixed positive number and $\chi$ mod $q $ is a primitive character. Our method is based on a formula stated by Garunk$\check{s}$tis and Steuding in \cite[\S 6, Remark ii)]{7} with the  function $f(s)=L'(s,\chi)X^{s}$. There are several reasons why the above sum with the parameter
$X$ is of interest. The first one is that the estimation of this sum can be used to study the normal distribution of the values of $\log\left|L'(\rho_{a,\chi},\chi)\right|$\footnote{In the case $a=0$, to study the normal distribution of the values
of $\log|\zeta'(1/2+i\gamma_{n})|$, Hiary and Odlyzko \cite{11} have been studying the behavior of the sum $\displaystyle{\sum_{T\leq\gamma_{n}\leq T+H}\zeta'(1/2+i\gamma_{n})e^{2\pi inx}}$ as a function of $x$, where $\rho=\beta+i\gamma$ denotes a non-trivial zero of $\zeta(s)$ and $\gamma_{n}$ is the $n$th positive imaginary part of a zero $\rho$. To do so, they  approximated the last sum by $\displaystyle{\sum_{T\leq\gamma_{n}\leq T+H}\zeta'(1/2+i\gamma_{n})e^{2\pi i\tilde{\gamma}_{n}x}}$, where $\tilde{\gamma}_{n}=\frac{1}{2\pi}\gamma_{n}\log\frac{T}{2\pi}$.}
, the second one is  to study the vertical distribution of $a$-points of $L(s,\chi)$.
Recall that in the case of $a = 0$, recently Fujii studied in \cite{4}  sums over the nontrivial zeros of $L(s,\chi)$.
 He  showed that under the Riemann hypothesis, for $X>1$,
$$\lim_{T\rightarrow\infty}\frac{1}{T/2\pi}\sum_{0<\gamma\leq T}\left[X^{1/2+i\gamma}\left(L(1/2+i\gamma,\chi)-1\right)-\xi(X)\right]=\left\{\begin{array}{crll}M(X,\chi)&\hbox{if}&X \hbox{\  is rational,}\\
0&\hbox{if}&X \hbox{\ is irrational,}
\end{array}\right.$$
where $L(s,\chi)$ is a Dirichlet $L$-function with primitive Dirichlet character $\chi(\mod\ q\geq3)$, $\xi(X)$ and $M(X,\chi)$ are some constants.
Furthermore, Garunk$\check{s}$tis,  Grahl and  Steuding \cite{8} obtained more subtle information on the value distribution of Dirichlet $L$-functions by considering certain discrete moments $\displaystyle{\sum_{\rho_{a,\chi};\ 0<\gamma_{a,\chi}<T}L(\rho_{a,\chi},\psi)}$. 
Their  formula extends a previous result due to Fujii \cite{5,6}. \\

Our main  result is stated in the following :
\begin{theorem}\label{theorem}
Let $X$ be a positive number. Then
\begin{eqnarray}  \label{eql} && \sum_{\rho_{a,\chi};\ 0<\gamma_{a,\chi}\leq
T}L'\left(\rho_{a,\chi},\chi\right)X^{\rho_{a,\chi}}  \nonumber\\
&=& -\frac{a T}{2\pi}\log^{2}\left(\frac{q T}{2\pi}\right)+\frac{aT}{\pi}\log\left(\frac{q  T}{2\pi}\right)-\frac{aT}{\pi}\nonumber\\
&&\  -\     \Delta(X)\chi\left[\Delta(X)X\right]\log{(X)}\left\{\frac{T}{4\pi}\log{\left(\frac{q
T}{2\pi}\right)}-\frac{T}{4\pi}+\frac{i\pi}{4}\frac{T}{2\pi}\right\}\nonumber\\
&&\ \ +\ \  \Delta(X)\chi\left[\Delta(X)X\right]\frac{T}{2\pi}\sum_{X=m  n}\Lambda{(n)}\log{(m)}\nonumber\\
&&\ \ +\ \ \frac{X}{\sqrt{q}}\sum_{k\leq  \frac{q T}{2\pi X}}\log^2(k)\overline{\chi}(k)e^{2i\pi k X/q}+\frac{1}{2\sqrt{q}}X\log(X)\sum_{k\leq \frac{q T}{2\pi
X}}\log(k)\overline{\chi}(k)e^{2i\pi k X/q}\nonumber\\
&&\ \ -\ \   \left(\frac{1}{2\sqrt{q}}X\log^2(X)-\frac{i\pi}{4\sqrt{q}}X\log{X}\right)\sum_{k\leq \frac{q T}{2\pi
X}}\overline{\chi}(k)e^{2i\pi k X/q} \nonumber\\
&&\ \ -\ \ \frac{X}{\sqrt{q}}\sum_{k\leq \frac{q T}{2\pi
X}}\sum_{k=mn}\Lambda(n)\overline{\chi}(n)\overline{\chi}(m)\log(m)e^{2i\pi k X/q}+ O\left(\sqrt{T}\log^3 T\right),
\end{eqnarray}
where
$\Delta(X) $ is defined by
\begin{equation}\label{delta}
\Delta(X)=\left\{
    \begin{array}{ll}
      1 & \hbox{if X is an integer }\geq 1, \\
      0 & \hbox{otherwise.}
    \end{array}
  \right.
  \end{equation}
\end{theorem}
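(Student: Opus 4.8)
The plan is to start from the Garunkštis--Steuding contour-integral representation for sums over $a$-points, applied to $f(s)=L'(s,\chi)X^{s}$. By the argument principle for $L(s,\chi)-a$, and since the residue of the integrand at a simple $a$-point $\rho_{a,\chi}$ is exactly $L'(\rho_{a,\chi},\chi)X^{\rho_{a,\chi}}$, one has
$$\sum_{\rho_{a,\chi};\,0<\gamma_{a,\chi}\le T}L'(\rho_{a,\chi},\chi)X^{\rho_{a,\chi}}=\frac{1}{2\pi i}\oint_{\mathcal R}\frac{\bigl(L'(s,\chi)\bigr)^{2}}{L(s,\chi)-a}\,X^{s}\,ds,$$
where $\mathcal R$ is the positively oriented rectangle with vertices $b+i,\ b+iT,\ 1-b+iT,\ 1-b+i$ for a fixed $b>1$ (say $b=2$), and $T$ is chosen to avoid ordinates of $a$-points so that the horizontal sides are controlled. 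This reduces the theorem to evaluating the four sides of $\mathcal R$.

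On the right side $\mathrm{Re}(s)=b$ the Dirichlet series converge absolutely: writing $L'(s,\chi)=-\sum_{n}\chi(n)(\log n)n^{-s}$ and expanding $1/(L(s,\chi)-a)$ geometrically about its limiting value at $+\infty$, the integrand becomes $X^{s}\sum_{N}b_{N}N^{-s}$. The key point is that $\frac{1}{2\pi i}\int_{b+i}^{b+iT}(X/N)^{s}\,ds$ is of order $T$ precisely on the diagonal $N=X$, which forces $X\in\mathbb{Z}_{\ge 1}$ and hence the factor $\Delta(X)$, and is a bounded oscillatory term otherwise. The diagonal of $(L'(s,\chi))^{2}/L(s,\chi)=L'(s,\chi)\cdot\tfrac{L'}{L}(s,\chi)$ at $N=X$ reproduces the $\chi(X)\sum_{X=mn}\Lambda(n)\log m$ contribution and part of the $\chi(X)\log X$ terms, while the off-diagonal pieces are pushed into the error by summation by parts.

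The heart of the argument is the left side $\mathrm{Re}(s)=1-b<0$, where I would insert the functional equation $L(s,\chi)=\mathcal G(s,\chi)L(1-s,\overline{\chi})$ so that $L(1-s,\overline{\chi})$ and $L'(1-s,\overline{\chi})$ again possess convergent Dirichlet series, and replace $\mathcal G'/\mathcal G$ and the $\Gamma$-quotient by their Stirling asymptotics, which introduce the factors $\log(qt/2\pi)$. Two types of contribution then emerge. The smooth part carrying the factor $a$ from $\frac{1}{L-a}=\frac1L\bigl(1+\frac aL+\cdots\bigr)$ produces, after integrating $\log^{2}(qt/2\pi)$ and $\log(qt/2\pi)$ against $dt$ on $[1,T]$, the $X$-independent main terms $-\frac{aT}{2\pi}\log^{2}(qT/2\pi)+\frac{aT}{\pi}\log(qT/2\pi)-\frac{aT}{\pi}$. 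The oscillatory part, in which the Gauss-sum factor of $\mathcal G(s,\chi)$ multiplies $X^{s}$ and the series $\sum_{k}\overline{\chi}(k)k^{s-1}$, is evaluated by a Mellin--Barnes/stationary-phase argument: the standard identity $\frac{1}{2\pi i}\int\Gamma(s)\,e^{\pm i\pi s/2}\,y^{-s}\,ds=e^{\pm iy}$ with $y=2\pi kX/q$ produces the exponentials $e^{2\pi ikX/q}$ and the conjugate character $\overline{\chi}(k)$, the saddle location imposes the truncation $k\le qT/(2\pi X)$, and the powers of $\log(qt/2\pi)$ turn into the weights $\log^{2}k,\ \log k,\ 1$ attached to the three exponential sums.

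The main obstacle is exactly this left-edge analysis: carrying the Stirling expansion of the $\Gamma$-quotient and of $\mathcal G'/\mathcal G$ to sufficient precision to pin down the constants in the $a$-main terms and in the $\Delta(X)$-terms, while showing that the secondary saddle contributions, the tails of the Dirichlet series, and the two horizontal sides together are $O(\sqrt T\log^{3}T)$. I would finish by collecting the diagonal pieces from both vertical sides into the stated $\Delta(X)$-terms, verifying the cancellation of the apparent $X$-dependence in the $a$-main terms, and absorbing all remaining oscillatory and boundary contributions into the error.
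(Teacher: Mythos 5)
Your opening move coincides with the paper's: both represent the sum as $\frac{1}{2\pi i}\oint_{\mathbf R}\frac{L'^{2}(s,\chi)}{L(s,\chi)-a}X^{s}\,ds$ over a rectangle whose height $T$ avoids ordinates of $a$-points. But from there the architectures diverge, and yours has a structural gap. The paper does \emph{not} harvest diagonal terms from the right edge of this contour: it places the right edge at $\mathrm{Re}(s)=B=\log T$, where $L'(s,\chi)\ll 2^{-\sigma}$ makes that side negligible, and instead splits the \emph{left} edge by the geometric expansion $\frac{L'}{L-a}=\frac{L'}{L}\left(1+\frac{a}{L}+\sum_{k\ge2}\left(\frac aL\right)^{k}\right)$ into three pieces: the term $a\left(\frac{L'}{L}\right)^{2}X^{s}$, which via $\frac{L'}{L}(s,\chi)=\frac{\Delta'}{\Delta}(s,\chi)-\frac{L'}{L}(1-s,\overline\chi)$ and $\frac{\Delta'}{\Delta}\approx-\log\frac{qt}{2\pi}$ yields the three $a$-main terms; a tail controlled by the bounds \eqref{eq.moment1}, \eqref{eq.moment2}, \eqref{eq.daven}; and the $a$-free term $\frac{L'^{2}}{L}X^{s}$, which is converted back into the sum $\sum_{0<\gamma_{\chi}\le T}L'(\rho_{\chi},\chi)X^{\rho_{\chi}}$ over \emph{zeros} and then evaluated by a second, different contour integral $\frac{1}{2\pi i}\oint\frac{\xi'}{\xi}(s,\chi)L'(s,\chi)X^{s}\,ds$ (Lemma \ref{lem.2}). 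The point of this reduction is that $\frac{\xi'}{\xi}$ carries the archimedean factor $\frac12\log\frac q\pi+\frac12\psi\left(\frac{s+\nu}{2}\right)\approx\frac12\log\frac{qt}{2\pi}+\frac{i\pi}{4}$, and it is this $\log\frac{qt}{2\pi}$ weight, integrated against the un-oscillating diagonal $m=X$ of $L'(s,\chi)=-\sum_{m}\chi(m)(\log m)m^{-s}$ on the right edge of Lemma \ref{lem.2}'s contour, that creates the term $-\Delta(X)\chi\left[\Delta(X)X\right]\log X\left\{\frac{T}{4\pi}\log\frac{qT}{2\pi}-\frac{T}{4\pi}+\frac{i\pi}{4}\frac{T}{2\pi}\right\}$ with its $T\log T$ growth; the left edge of that same contour, treated by the Gonek--Levinson saddle-point method, gives the exponential sums.

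Your plan cannot produce that term where you say it appears. A Dirichlet-series diagonal on a fixed vertical line $\mathrm{Re}(s)=b$ contributes $\frac{T}{2\pi}b_{X}$ with $b_{X}$ the $T$-independent $X$-th coefficient of $\frac{(L')^{2}}{L-a}$; it can never grow like $T\log T$, and $b_{X}$ contains no $\log X$ piece either, so neither the $T\log T$ part nor "part of the $\chi(X)\log X$ terms" can come from your right edge, while your left-edge analysis, as described, yields only the $a$-main terms and the exponential sums. Worse, your right-edge diagonal contains terms the theorem does not have: from $\frac{(L')^{2}}{L-a}=L'\cdot\frac{L'}{L}+a\left(\frac{L'}{L}\right)^{2}+\cdots$ the $X$-th coefficient is $\chi(X)\sum_{X=mn}\Lambda(n)\log m+a\chi(X)\sum_{X=mn}\Lambda(m)\Lambda(n)+O_{a}(1)$, so you pick up $a$-dependent diagonals of size $T$ that are absent from the stated formula, and you never explain how they disappear (in the paper they never arise, because the $a$-dependence is processed on the left edge, where after the functional equation the relevant phases are $e^{it\log(Xmn)}$ with $m,n\ge2$, hence non-stationary and $O(\log^{2}T)$). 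Two smaller points: with $b=2$ fixed your rectangle may fail to enclose all nontrivial $a$-points (they lie in $0<\mathrm{Re}(s)<A$ with $A$ depending on $a$, and the geometric expansion of $\frac{1}{L-a}$ need not converge on $\mathrm{Re}(s)=2$ for arbitrary $a$); and the case $a=1$ requires the paper's auxiliary function $l(s)=q^{s}(L(s,\chi)-1)$, which you do not mention. In short, the two essential ideas of the paper's proof --- reducing the $a$-point sum to the zero sum plus $a$-main terms, and switching to the integrand $\frac{\xi'}{\xi}L'X^{s}$ to evaluate the zero sum --- are missing from your proposal, and without them the stated explicit formula is not recovered.
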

\noindent{\bf Remark.} For $X=1$, we obtain Garunk$\check{s}$tis and  Steuding's results \cite{7} in the case of the  Riemann zeta function. And for $q=1$ and $a=0$, we obtain Fujii's results \cite{3}.\\

Here and in the sequel the implicit constant in the error terms may depend on $a$ and $X$; the formulas of   Theorem \ref{theorem}  are
     not uniform with respect to $X$.  We note that the proofs  uses standard methods: contour integration, basic properties of the
functional equation for $L(s,\chi)$ and  Gonek's lemma (see. Gonek  \cite[Lemma 5]{9}).

\section{Preliminary lemmas}
To prove Theorem \ref{theorem}, we start with well-known results on the Dirichlet $L$-function $L(s,\chi)$ (see Davenport book \cite{1}). If $\chi$ mod $q$ is a primitive character, then $$\xi(s,\chi)=\left(\frac{q}{\pi}\right)^{\frac{s+\nu}{2}}\Gamma\left(\frac{s+\nu}{2}\right)L(s,\chi)$$ satisfies the functional equation

\begin{equation}\label{eqqL1}
\xi(s,\chi)=\frac{\tau(\chi)}{i^{\nu}\sqrt{q}}\xi\left(1-s,\overline{\chi}\right)
\end{equation}
where $\tau(\chi)=\sum_{m \mod q}\chi(m)e^{\frac{2i\pi m}{q}},$
with $\nu=\frac{1}{2}(1-\chi(-1)).$ We note that
$$\frac{\xi'}{\xi}(s,\chi)=\frac{1}{2}\log\left(\frac{q}{\pi}\right)+\frac{1}{2}\psi\left(\frac{s+\nu}{2}\right)+\frac{L'}{L}(s,\chi),$$
where $\psi(s)=\frac{\Gamma'}{\Gamma}(s)$ and that for
$|\arg{(s)}|<\pi-\theta$ with arbitrary fixed positive $\theta$ and
for $|s|\geq \frac{1}{2},$ we have \begin{equation}\label{eqL2}
\psi(s)=\log{(s)}+O\left(\frac{1}{|s|}\right)=\log{|t|}+\frac{i\pi}{2}+O\left(\frac{\sigma}{|t|}\right),
\end{equation}
as $|t|\longrightarrow \infty.$ In \cite[Lemma 8]{8}, Garunk$\check{s}$tis,  Grahl and  Steuding  proved that there exist positive constants $c_{1}$ and $c_{2}$ such that, for $\sigma\leq 0$ and $|t|>2$,
\begin{equation}\label{eq.moment1}|L(\sigma+it,\chi)|>\frac{c_{1}|t|^{\frac{1}{2}-\sigma}}{\log^{7}t}\end{equation}
and
\begin{equation}\label{eq.moment2}|L(\sigma+it,\chi)|<c_{2}|t|^{\frac{1}{2}-\sigma}\log t.\end{equation}
Furthermore, for $t>t_0$ and
$1-\frac{c}{\log{(t)}}\leq \sigma\leq 2$, we have (see. \cite[page. 28]{8})
\begin{equation}\label{eqL3}
L'(s,\chi)=O\left(\log^2 t\right).
\end{equation}
Using partial summation and the P\'olya-Vinogradov inequality,  for $t\geq t_0>0$ and for any $\sigma>1$, we obtain ¨
\begin{equation}\label{eqL4} L'(s,\chi)=-\sum_{n\leq t}\frac{\chi(n)\log(n)}{n^s}+O\left(t^{-\sigma}\log t\right).
\end{equation}
From the partial fraction decomposition of $L(s,\chi)$, we get (see Davenport book \cite{1} or \cite[Equation (19)]{8})
\begin{equation}\label{eq.daven}
\frac{L'}{L}(\sigma+it, \chi)\ll \log^{2}|t|, \ \ \ \hbox{for} \ -1\leq\sigma\leq2\ \hbox{and}\ |t|\geq2.
\end{equation}
By the functional equation and the Phragmén-Lindel$\ddot{o}$f principle,  we deduce that
$$L(\sigma+it,\chi)\ll_{\epsilon}\left\{\begin{array}{crll} |t|^{\frac{1}{2}-\sigma+\epsilon}& \hbox{if} &\sigma<0,\\ |t|^{\frac{1}{2}(1-\sigma)+\epsilon}& \hbox{if} &0\leq\sigma\leq1,\\ |t|^{\epsilon}&\hbox{if} &\sigma>1,\end{array}\right.$$
as $|t|\rightarrow\infty$ and where $\epsilon$ is an arbitrarily small positive number (this is a special case of \cite[Lemma 2.1]{16} established for functions in the Selberg class in which the Dirichlet $L$-functions are elements).  Then, by Cauchy's integral formula, we get
          $$L'(s,\chi)=\frac{1}{2\pi i}\int_{{\mathcal L}}\frac{L(s,\chi)}{(\omega-s)^{2}}d\omega,$$
          \noindent          where  ${\mathcal L}$ is any arbitrarily small circle with center $s$. Using the last bound of $ L(\sigma+it,\chi)$, it follows that
                  $$L'(\sigma+it,\chi)\ll_{\epsilon}\left\{\begin{array}{crll} |t|^{\frac{1}{2}-\sigma+\epsilon}& \hbox{if} &\sigma<0,\\
                  |t|^{\frac{1}{2}(1-\sigma)+\epsilon}& \hbox{if} &0\leq\sigma\leq1,\\ |t|^{\epsilon}&\hbox{if} &\sigma>1.\end{array}\right.$$
Furthermore, for fixed complex number $a$, for \ $-1\leq\sigma\leq2$ and $|t|\geq1$, we have
\footnote{The proof is very closely to that stated in \cite[Lemma 8]{7}  with minor change  and using \cite[Ch. 7, Theorem 4.1]{18} (see also \cite[Lemmas 2.4 and 2.6]{17}).}
$$\frac{L'(s,\chi)}{L(s,\chi)-a}=\sum_{|t-\gamma_{a,\chi}|\leq1}\frac{1}{s-\rho_{a,\chi}}+O\left(\log q(|t|+1)\right).$$
Let $b$ be some constant which will be given below. In view of the number of nontrivial $a$-points  \eqref{eq T2 }, we obtain for $\sigma>1-b$
\begin{equation}\label{eq.l'/l}
\frac{L'^{2}(\sigma+it,\chi)}{L(\sigma+it,\chi)-a}\ll |t|^{(1-\sigma)/2+\epsilon}, \ \ \hbox{as}\ \ |t|\geq2.
\end{equation}
Let
$$\Delta(s,\chi)=\frac{\tau(\chi)}{i^{\nu}\sqrt{\pi}}\left(\frac{\pi}{q}\right)^{s}\frac{\Gamma\left(\frac{1}{2}(1-s+\nu)\right)}{\Gamma\left(\frac{1}{2}(s+\nu)\right)}.$$
In the next lemma, we obtain the approximate functional equation of $L'(s,\chi)$ in the following form (which  will be sufficient for our purpose).
\begin{lem}\label{lemma.L'}
For $t>t_{0}>0$ and  $0\leq \sigma\leq1$, we have
\begin{eqnarray*}
L'(s,\chi)&=&-\sum_{n\leq\sqrt{\frac{qt}{2\pi}}}\frac{\chi(n)\log(n)}{n^s}-\log\left(\frac{qt}{2\pi}\right)\Delta(s,\chi)\sum_{n\leq
\sqrt{\frac{qt}{2\pi}}}\frac{\overline{\chi(n)}}{n^{1-s}}\\
&&\ \ +\ \
\Delta(s,\chi)\sum_{n\leq\sqrt{\frac{qt}{2\pi}}}\frac{\overline{\chi(n)}\log{n}}{n^{1-s}}+O\left(t^{-\frac{\sigma}{2}}\log{t}\right).
\end{eqnarray*}
\end{lem}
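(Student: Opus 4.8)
The plan is to obtain the approximate functional equation for $L'(s,\chi)$ by differentiating the classical balanced approximate functional equation for $L(s,\chi)$ itself. Writing the functional equation \eqref{eqqL1} as $L(s,\chi)=\Delta(s,\chi)L(1-s,\overline{\chi})$, recall (Davenport \cite{1}) that for $0\le\sigma\le1$ and $t>t_0$ one has the symmetric form
$$L(w,\chi)=\sum_{n\le\sqrt{q\,\mathrm{Im}(w)/2\pi}}\frac{\chi(n)}{n^{w}}+\Delta(w,\chi)\sum_{n\le\sqrt{q\,\mathrm{Im}(w)/2\pi}}\frac{\overline{\chi(n)}}{n^{1-w}}+O\!\left((\mathrm{Im}\,w)^{-\sigma_w/2}\right),$$
where $\sigma_w=\mathrm{Re}(w)$. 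The difficulty is that the summation ranges depend on $\mathrm{Im}(w)$ in a non-holomorphic way, so one cannot simply differentiate term by term. To circumvent this I would represent the derivative by Cauchy's integral formula $L'(s,\chi)=\frac{1}{2\pi i}\int_{|w-s|=r}\frac{L(w,\chi)}{(w-s)^2}\,dw$ on a small circle of radius $r\asymp 1/\log t$, and insert the approximate functional equation for $L(w,\chi)$ on that circle (for $\sigma$ at the endpoints of $[0,1]$ the circle pokes slightly outside the strip, which I would absorb by noting the formula extends a little beyond).

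The main terms then arise by freezing each summation range at its central value $N=\sqrt{qt/2\pi}$: the two finite sums $\sum_{n\le N}\chi(n)n^{-w}$ and $\Delta(w,\chi)\sum_{n\le N}\overline{\chi(n)}n^{w-1}$ are holomorphic in $w$, so Cauchy's formula returns exactly their $w$-derivatives at $w=s$. Differentiating the first produces $-\sum_{n\le N}\chi(n)\log(n)n^{-s}$, the first stated term. Differentiating the second by the product rule gives $\Delta'(s,\chi)\sum_{n\le N}\overline{\chi(n)}n^{s-1}+\Delta(s,\chi)\sum_{n\le N}\overline{\chi(n)}\log(n)n^{s-1}$; here I would use
$$\frac{\Delta'}{\Delta}(s,\chi)=\log\!\left(\frac{\pi}{q}\right)-\tfrac12\psi\!\left(\tfrac{1-s+\nu}{2}\right)-\tfrac12\psi\!\left(\tfrac{s+\nu}{2}\right)=-\log\!\left(\frac{qt}{2\pi}\right)+O\!\left(\frac1t\right),$$
which follows by logarithmic differentiation of the definition of $\Delta(s,\chi)$ together with the digamma asymptotics \eqref{eqL2} (the two imaginary $\tfrac{i\pi}{2}$ contributions cancel). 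This yields the second and third stated terms, the contribution of the $O(1/t)$ remainder being negligible against the claimed error.

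The remaining work is to show that all discrepancies are absorbed in $O(t^{-\sigma/2}\log t)$. First, the differentiated error term contributes $\frac{1}{2\pi i}\int_{|w-s|=r}\frac{O(t^{-\sigma/2})}{(w-s)^2}\,dw=O(t^{-\sigma/2}/r)=O(t^{-\sigma/2}\log t)$, using that $\sigma_w=\sigma+O(1/\log t)$ keeps $t^{-\sigma_w/2}\asymp t^{-\sigma/2}$ on the circle. Second, the moving summation limit must be reconciled with the frozen value $N$: on $|w-s|=r$ one has $\sqrt{q\,\mathrm{Im}(w)/2\pi}=\sqrt{qt/2\pi}\,(1+O(r/t))$, so the range changes by $O(r/\sqrt t)<1$ and at most one integer $n\asymp\sqrt{qt/2\pi}$ is gained or lost on each arc. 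Each such boundary term is $\ll t^{-\sigma/2}$ (for the first sum $n^{-\sigma}\asymp t^{-\sigma/2}$; for the dual sum $|\Delta(s,\chi)|\,n^{\sigma-1}\asymp t^{1/2-\sigma}\,t^{-(1-\sigma)/2}=t^{-\sigma/2}$, using $|\Delta(s,\chi)|\asymp(t/2\pi)^{1/2-\sigma}$), and after integration against $(w-s)^{-2}$ over the circle it contributes $O(t^{-\sigma/2}/r)=O(t^{-\sigma/2}\log t)$. Collecting the three main terms and these estimates gives the lemma. I expect the genuine obstacle to be precisely this last point — the rigorous bookkeeping of the non-holomorphic, $t$-dependent summation range under differentiation — whereas the identification of the main terms is a routine consequence of the product rule and the digamma asymptotics.
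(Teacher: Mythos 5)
Your proposal is correct and follows essentially the same route as the paper's proof: both start from Rane's approximate functional equation for $L(s,\chi)$, obtain the main terms via the product rule $L'=f_1'+\Delta'(s,\chi)f_2+\Delta(s,\chi) f_2'$, and replace $\frac{\Delta'}{\Delta}(s,\chi)$ by $-\log\left(\frac{qt}{2\pi}\right)+O\left(\frac{1}{t}\right)$. The Cauchy-integral-formula bookkeeping you supply for the $t$-dependent summation cutoff (circle of radius $\asymp 1/\log t$, frozen cutoff, boundary-term estimates) is precisely the content of the Levinson argument \cite{14} that the paper invokes with the phrase ``Similarly to \cite[page. 389]{14}'' rather than writing out.
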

\begin{proof} The proof uses the same argument and similar notations as Levinson \cite{15}. According to Lavrik \cite[Corollary 1 of Theorem 1]{13} or to Rane \cite{19}, we  use the following approximate functional equation of $L(s,\chi)$, if $\chi$ mod
$q$ is a primitive character, we have \footnote{An exact expression for the error term in the approximate functional equation of $L(s,\chi)$
is stated in \cite[Theorem 1 and Lemma 5]{13} and which noted  $R_{xy}$ (see also \cite[page 141]{19}). For example, with Lavrik's notation, when $x=y=\sqrt{\frac{qt}{2\pi}}$, one can see that the error term
$R_{xy}$ satisfies $R_{xy}=O_{q}\left(t^{-\frac{\sigma}{2}}\right)$.}
$$L(s,\chi)=\sum_{n\leq
\sqrt{\frac{qt}{2\pi}}}\frac{\chi(n)}{n^s}+\Delta(s,\chi)\sum_{n\leq
\sqrt{\frac{qt}{2\pi}}}\frac{\overline{\chi}(n)}{n^{1-s}}+O\left(t^{-\frac{\sigma}{2}}\right),$$
where $\Delta(s,\chi)$ can be written as follows
$\Delta(s,\chi):=i\tau(\chi)\chi(-1)(2\pi)^{s-1}q^{-s}\Gamma(1-s)e^{-\frac{i\pi
s}{2}}.$ Writing $L(s,\chi)=f_1(s)+\Delta(s,\chi)f_2(s)$ with $\displaystyle{ f_1(s)=\sum_{n\leq
\sqrt{\frac{qt}{2\pi}}}\frac{\chi(n)}{n^s}+O\left(t^{-\frac{\sigma}{2}}\right)}$
and
$$\Delta(s,\chi)f_2(s)=L(s,\chi)-f_1(s)=\Delta(s,\chi)\sum_{n\leq
\sqrt{\frac{qt}{2\pi}}}\frac{\overline{\chi}(n)}{n^{1-s}}.$$
Similarly to \cite[page 389]{15}, we get \footnote{Futhermore, from \cite[Theorem 1 and Lemma 5]{13} the  expression $R_{xy}$ is differentiable
and $\frac{dR_{xy}}{d\sigma}=O_{q}\left(t^{-\frac{\sigma}{2}}\log{(t)}\right)$.}
\begin{eqnarray*}
L'(s,\chi)&=&f_1'(s)+\Delta'(s,\chi)f_2(s)+\Delta(s,\chi)f_2'(s)\\
&=&-\sum_{n\leq
\sqrt{\frac{qt}{2\pi}}}\frac{\chi(n)\log(n)}{n^s}+O\left(t^{-\frac{\sigma}{2}}\log{(t)}\right)\\
&&\ \ +\ \ \Delta'(s,\chi)f_2(s)+\Delta(s,\chi)\left[\sum_{n\leq
\sqrt{\frac{qt}{2\pi}}}\frac{\overline{\chi}(n)\log(n)}{n^{1-s}}\right].
\end{eqnarray*}
Since
$$\Delta'(s,\chi)f_2(s)=\frac{\Delta'}{\Delta}(s,\chi)\Delta(s,\chi)f_2(s)=\frac{\Delta'}{\Delta}(s,\chi)\left[\Delta(s,\chi)\sum_{n\leq
\sqrt{\frac{qt}{2\pi}}}\frac{\overline{\chi}(n)}{n^{1-s}}\right].
$$
Hence, by using that for $t>t_{0}$
\begin{equation}\label{eqq1}\frac{\Delta'}{\Delta}(\sigma+it,\chi)=-\log\left(\frac{q
t}{2\pi}\right)+O\left(\frac{1}{t}\right),\end{equation} we finish the proof of Lemma \ref{lemma.L'}
\end{proof}
Using the   approximate functional equation of $L'(s,\chi)$ given in Lemma \ref{lemma.L'}, we  prove easily with the same argument used by Fujii in \cite[Lemma 3]{3} the following result.
\begin{lem}\label{lemm} Let $\delta=1+\frac{1}{\log{T}}$. Then
\begin{equation}\label{eqLem1}\int_{1-\delta}^\delta|L'(\sigma+iT,\chi)|d\sigma\ll
\sqrt{T}\log T.
\end{equation}
\end{lem}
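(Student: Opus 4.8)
The plan is to substitute the approximate functional equation of Lemma \ref{lemma.L'} (evaluated at $s=\sigma+iT$) directly into the integral and to estimate the contribution of each of its four pieces separately. Write $M=qT/2\pi$ and $N=\sqrt{M}$, so that the three Dirichlet polynomials all have length $N$. Since the integration range $[1-\delta,\delta]=[-1/\log T,\,1+1/\log T]$ overshoots the strip $0\le\sigma\le1$ in which the lemma is valid by two intervals of length $1/\log T$, I would first isolate the bulk $\int_0^1|L'(\sigma+iT,\chi)|\,d\sigma$ and deal with the two thin end-pieces afterwards. Throughout, the only nontrivial analytic input is the size of the factor $\Delta(s,\chi)$: from the explicit expression $\Delta(s,\chi)=i\tau(\chi)\chi(-1)(2\pi)^{s-1}q^{-s}\Gamma(1-s)e^{-i\pi s/2}$, together with $|\tau(\chi)|=\sqrt{q}$ and Stirling's formula for $|\Gamma(1-\sigma-iT)|$, one has $|\Delta(\sigma+iT,\chi)|\asymp M^{1/2-\sigma}$ uniformly on the strip.

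On the bulk I would bound each sum in absolute value and interchange summation with integration, which is legitimate since all the sums are finite. The first polynomial contributes $\int_0^1\sum_{n\le N}\frac{\log n}{n^\sigma}\,d\sigma=\sum_{2\le n\le N}\log n\cdot\frac{1-n^{-1}}{\log n}\le N\ll\sqrt{T}$, using $\int_0^1 n^{-\sigma}\,d\sigma=(1-n^{-1})/\log n$. The error term $O(T^{-\sigma/2}\log T)$ integrates to $\log T\int_0^1 T^{-\sigma/2}\,d\sigma\ll 1$, so it is negligible.

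The heart of the argument is the estimation of the two polynomials carrying the factor $\Delta(s,\chi)$. After inserting $|\Delta(\sigma+iT,\chi)|\asymp M^{1/2-\sigma}$ and interchanging, the decisive saving comes from $\int_0^1(n/M)^\sigma\,d\sigma\le 1/\log(M/n)\le 2/\log M$, valid because $n\le N=\sqrt{M}$ forces $\log(M/n)\ge\tfrac12\log M$. This turns the second sum into $\ll\log T\cdot\frac{\sqrt{M}}{\log M}\sum_{n\le N}\frac1n\ll\sqrt{T}\log T$, and, with the extra $\log n$, the third sum into $\ll\frac{\sqrt{M}}{\log M}\sum_{n\le N}\frac{\log n}{n}\ll\sqrt{M}\log M\ll\sqrt{T}\log T$. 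Thus the bulk contributes $\ll\sqrt{T}\log T$.

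Finally I would dispatch the two end-pieces. On $[1,1+1/\log T]$ the bound $L'(\sigma+iT,\chi)=O(\log^2 T)$ from \eqref{eqL3} gives a contribution $\ll\log^2 T\cdot(1/\log T)=\log T$. On $[-1/\log T,0]$ I would deduce a pointwise bound for $L'$ from the size of $L$: by \eqref{eq.moment2}, and its Phragm\'en--Lindel\"of extension to $0\le\sigma\le1$ (so as to keep only logarithmic, not $T^\epsilon$, losses), one has $|L(w,\chi)|\ll\sqrt{T}\log T$ on a disk of radius $1/(2\log T)$ centred at $\sigma+iT$, whence Cauchy's integral formula yields $L'(\sigma+iT,\chi)\ll\sqrt{T}\log^2 T$ and an integrated contribution $\ll\sqrt{T}\log T$. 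Combining the bulk and the two end-pieces gives the claimed bound. I expect the genuinely delicate point to be the two $\Delta$-sums: one might fear a loss from the factor $1/(1-\sigma)$ that $\sum_{n\le N}n^{\sigma-1}$ produces as $\sigma\to1$, but this is exactly offset by the smallness of $M^{1/2-\sigma}$ there, and the clean way to see the cancellation is the interchange followed by $\int_0^1(n/M)^\sigma\,d\sigma\le 2/\log M$. The secondary nuisance is the sub-zero end-piece, where one must pass from a bound on $L$ to one on $L'$ via Cauchy's formula, since the approximate functional equation is stated only for $0\le\sigma\le1$.
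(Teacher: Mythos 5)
Your proof is correct, but it is organized differently from the paper's. The paper never integrates the approximate functional equation over the left half of the strip: it first applies the exact functional equation, in the form
$L'(s,\chi)=\frac{1}{\Delta(1-s,\overline{\chi})}\left(-L'(1-s,\overline{\chi})+\frac{\Delta'}{\Delta}(1-s,\overline{\chi})L(1-s,\overline{\chi})\right)$,
to fold the range $[1-\delta,1/2]$ onto $[1/2,\delta]$, so that the entire estimation takes place on the right half-strip against the growth factor $\left(\frac{qT}{2\pi}\right)^{\sigma-1/2}$; there it uses Lemma \ref{lemma.L'} on $[1/2,1]$ and the partial-sum approximation \eqref{eqL4} on $[1,\delta]$. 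You instead insert Lemma \ref{lemma.L'} directly on $[0,1]$ --- exactly its stated range of validity --- and patch the two slivers $[1,\delta]$ (via \eqref{eqL3}) and $[-1/\log T,0]$ (via Cauchy's formula on a disk of radius $1/(2\log T)$). Both arguments ultimately turn on the same two analytic facts: $|\Delta(\sigma+iT,\chi)|\asymp\left(\frac{qT}{2\pi}\right)^{1/2-\sigma}$, and the saving $\int (n/M)^{\sigma}\,d\sigma\ll 1/\log M$ for $n\le\sqrt{M}$ obtained after interchanging sum and integral (the paper performs the equivalent computation with the factor $T^{\sigma-1/2}$ in place of your $M^{1/2-\sigma}$). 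What the reflection buys the paper is that the region $\sigma\le 0$ never appears, so nothing beyond the estimates already stated in the paper is needed. What your direct route costs is precisely the left end-piece: there you must import a convexity bound for $L$ inside the critical strip with only logarithmic loss, since, as you correctly flag, a $T^{\epsilon}$-loss bound would only yield $T^{1/2+\epsilon}$ and lose the lemma. That input is standard and is in any case available from Rane's approximate functional equation for $L$ (quoted in the proof of Lemma \ref{lemma.L'}) evaluated near $\sigma=0$, so your argument is complete in substance; it simply distributes the use of the functional equation differently --- packaged inside the $\Delta(s,\chi)$-terms of Lemma \ref{lemma.L'} plus a Cauchy-formula patch, rather than applied once, exactly, to the whole integrand as the paper does.
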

 \begin{proof}
 From the functional equation of $L(s,\chi)$, we have
 $$L'(s,\chi)=\frac{1}{\Delta(1-s,\overline{\chi})}\left(-L'(1-s,\overline{\chi})+\frac{\Delta'}{\Delta}(1-s,\overline{\chi})L(1-s,\overline{\chi})\right).$$
 Therefore
 $$\int_{1-\delta}^\delta|L'(\sigma+iT,\chi)|d\sigma=\int_{1-\delta}^{1/2}|L'(\sigma+iT,\chi)|d\sigma+\int_{1/2}^{\delta}|L'(\sigma+iT,\chi)|d\sigma=M_{1}+M_{2},$$
 where
 \begin{eqnarray}M_{1}&=&\int_{1-\delta}^{1/2}\left|\frac{1}{\Delta(1-\sigma-iT,\overline{\chi})}\left(-L'(1-\sigma-iT,\overline{\chi})+\frac{\Delta'}{\Delta}(1-\sigma-iT,\overline{\chi})L(1-\sigma-iT,\overline{\chi})\right)\right|d\sigma\nonumber\\
 &=&\int_{1/2}^{\delta}\left|\frac{1}{\Delta(\sigma+iT,\overline{\chi})}\left(-L'(\sigma+iT,\overline{\chi})+\frac{\Delta'}{\Delta}(\sigma+iT,\overline{\chi})L(\sigma+iT,\overline{\chi})\right)\right|d\sigma.\nonumber
 \end{eqnarray}
 In any strip $\sigma_{1}\leq\sigma\leq\sigma_{2}$, we have uniformly as $t\rightarrow\infty$, \ $\left|\frac{1}{\Delta(\sigma+iT,\overline{\chi})}\right|\ll\left(\frac{qT}{2\pi}\right)^{\sigma-1/2}.$ Applying the last asymptotic formula, we obtain
 $$M_{1}\ll \int_{1/2}^{\delta}|L'(\sigma+iT,\overline{\chi})|\left(\frac{qT}{2\pi}\right)^{\sigma-1/2}d\sigma+\log T \int_{1/2}^{\delta}|L(\sigma+iT,\overline{\chi})|\left(\frac{qT}{2\pi}\right)^{\sigma-1/2}d\sigma.$$
 Then
 $$M_{1}+M_{2}\ll \int_{1/2}^{\delta}|L'(\sigma+iT,\overline{\chi})|\left(\frac{qT}{2\pi}\right)^{\sigma-1/2}d\sigma+\log T \int_{1/2}^{\delta}|L(\sigma+iT,\overline{\chi})|\left(\frac{qT}{2\pi}\right)^{\sigma-1/2}d\sigma.$$
 Let write the first integral as follows
\begin{eqnarray}\int_{1/2}^{\delta}&|L'(\sigma+iT,\overline{\chi})|&\left(\frac{qT}{2\pi}\right)^{\sigma-1/2}d\sigma=\int_{1/2}^{1}|L'(\sigma+iT,\overline{\chi})|\left(\frac{qT}{2\pi}\right)^{\sigma-1/2}d\sigma\nonumber\\&&\ +\ \int_{1}^{\delta}|L'(\sigma+iT,\overline{\chi})|\left(\frac{qT}{2\pi}\right)^{\sigma-1/2}d\sigma=M_{3}+M_{4}.\nonumber
\end{eqnarray}
Using the approximate functional equation of $L'(s,\chi)$ and the fact that $|\overline{\chi}(n)|\leq1$, we get
{\small\begin{eqnarray}&&M_{3}\nonumber\\&\ll&\int_{1/2}^{1}\left|\sum_{n\leq\sqrt{\frac{qt}{2\pi}}}\frac{\log(n)}{n^s}+T^{\frac{1}{2}-\sigma}\sum_{n\leq
\sqrt{\frac{qt}{2\pi}}}\frac{1}{n^{1-s}} +T^{\frac{1}{2}-\sigma}\log T\sum_{n\leq\sqrt{\frac{qt}{2\pi}}}\frac{\log{n}}{n^{1-s}}+O\left(T^{-\frac{\sigma}{2}}\log T\right)\right|T^{\sigma-\frac{1}{2}}d\sigma\nonumber\\
&\ll&\sqrt{T}\log T.\nonumber
\end{eqnarray}}
Now, using another approximation of $L'(s,\chi)$ as given by equation \eqref{eqL4} above, we get
\begin{eqnarray}
M_{4}&\ll&\int_{1}^{\delta}\left(\sum_{n\leq T}\frac{|\chi(n)|\log(n)}{n^\sigma}+O\left(T^{-\sigma}\log T\right)\right)T^{\sigma-\frac{1}{2}}d\sigma\nonumber\\
&\ll&\sqrt{T}(\delta-1)\sum_{n\leq T}\frac{\log(n)}{n}\nonumber\\
&\ll&\sqrt{T}\log T.\nonumber
\end{eqnarray}
Similarly, we get
$$\int_{1/2}^{\delta}|L(\sigma+iT,\overline{\chi})|\left(\frac{qT}{2\pi}\right)^{\sigma-1/2}d\sigma\ll\sqrt{T}.$$
Hence, we obtain the assertion of Lemma \ref{lemm}.
 \end{proof}
 An explicit formula for the sums $$\displaystyle{\sum_{\rho_{\chi}=\beta_{\chi}+i\gamma_{\chi};\ 0<\gamma_{\chi}\leq T}L'\left(\rho_{\chi},\chi\right) X^{\rho_{\chi}}},$$ where $\rho_{\chi}$  runs over the nontrivial zeros of $L(s,\chi)$ is stated in the following :
\begin{lem}\label{lem.2} Let $X$ be a  positive number and $\rho_{\chi}=\beta_{\chi}+i\gamma_{\chi}$ denotes a nontrivial zero of the
Dirichlet $L$-function $L(s,\chi)$. Then
\begin{eqnarray}\label{eql} &&\sum_{\rho_{\chi};\ 0<\gamma_{\chi}\leq
T}L'\left(\rho_{\chi},\chi\right)X^{\rho_{\chi}}
 =-\Delta(X)\chi\left[\Delta(X)X\right]\log{(X)}\left\{\frac{T}{4\pi}\log{\left(\frac{q
T}{2\pi}\right)}-\frac{T}{4\pi}+\frac{i\pi T}{8\pi}\right\}\nonumber\\
&&\ \ \ \ \ \ \ \ +\ \  \Delta(X)\chi\left[\Delta(X)X\right]\frac{T}{2\pi}\sum_{X=m
n}\Lambda{(n)}\log{(m)}\nonumber\\
&&\ \ \ \ \ \ \ \ +\ \ \frac{X}{\sqrt{q}}\sum_{k\leq \frac{q T}{2\pi X}}\log^2(k)\overline{\chi}(k)e^{2i\pi k X/q}
+\frac{X\log(X)}{2\sqrt{q}}\sum_{k\leq \frac{q T}{2\pi X}}\log(k)\overline{\chi}(k)e^{2i\pi k X/q}\nonumber\\ &&
\ \ \ \ \ \ \ \ -\ \
\frac{1}{\sqrt{q}}\left(\frac{1}{2}X\log^2(X)-\frac{i\pi}{4}X\log{X}\right)\sum_{k\leq
\frac{q T}{2\pi X}}\overline{\chi}(k)e^{2i\pi k X/q}\nonumber\\
&&\ \ \ \ \ \ \ \ \  -\ \frac{X}{\sqrt{q}}\sum_{k\leq \frac{q T}{2\pi
X}}\sum_{k=mn}\Lambda(n)\overline{\chi}(n)\overline{\chi}(m)\log(m)e^{2i\pi k X/q}+
O\left(\sqrt{T}\log^3{T}\right),\end{eqnarray} where
$\Delta(X) $ is defined by equation \eqref{delta}.
\end{lem}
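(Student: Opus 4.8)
The plan is to realize the sum as a contour integral via the argument principle. Since $\frac{L'}{L}(s,\chi)$ has a simple pole with residue equal to the multiplicity at each nontrivial zero $\rho_\chi$, while $L'(s,\chi)X^{s}$ is entire, the residue theorem gives
\[
\sum_{\rho_\chi;\ 0<\gamma_\chi\le T}L'(\rho_\chi,\chi)X^{\rho_\chi}=\frac{1}{2\pi i}\oint_{\mathcal C}\frac{L'}{L}(s,\chi)\,L'(s,\chi)\,X^{s}\,ds,
\]
where $\mathcal C$ is the positively oriented rectangle with vertices $1-\delta+i,\ \delta+i,\ \delta+iT,\ 1-\delta+iT$, with $\delta=1+\frac{1}{\log T}$. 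I would first shift $T$ by a bounded amount so that no ordinate $\gamma_\chi$ lies within $O(1/\log T)$ of $T$, which renders the top edge harmless, and then split $\mathcal C$ into its two horizontal and two vertical segments.

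On the horizontal segments I would bound $\frac{L'}{L}(\sigma+iT,\chi)\ll\log^{2}T$ (using \eqref{eq.daven} for $\sigma\ge 1$ and the functional equation for $\sigma<\tfrac12$, together with the zero-avoiding choice of $T$), note that $|X^{s}|=X^{\sigma}=O(1)$ since $X$ is fixed, and invoke Lemma \ref{lemm}; this produces a contribution of size $O(\sqrt T\log^{3}T)$ that is absorbed into the error term, the bottom edge at height $1$ contributing only $O(1)$. On the right edge $\sigma=\delta$ I insert the absolutely convergent Dirichlet series $\frac{L'}{L}(s,\chi)=-\sum_n\Lambda(n)\chi(n)n^{-s}$ and $L'(s,\chi)=-\sum_m\chi(m)\log(m)m^{-s}$, so that the integrand becomes $\sum_{n,m}\Lambda(n)\chi(nm)\log(m)(X/nm)^{s}$. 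Integrating term by term, only the diagonal $nm=X$ survives with a main term linear in $T$, namely $\frac{T}{2\pi}\chi(X)\sum_{X=mn}\Lambda(n)\log(m)$, which of course requires $X$ to be a positive integer; this is exactly the term $\Delta(X)\chi[\Delta(X)X]\frac{T}{2\pi}\sum_{X=mn}\Lambda(n)\log(m)$, while every off-diagonal $nm\ne X$ contributes only $O(1)$.

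The crux is the left edge $\sigma=1-\delta<0$, where $\mathrm{Re}(1-s)=\delta>1$. Here I would substitute the differentiated functional equation in the forms $\frac{L'}{L}(s,\chi)=\frac{\Delta'}{\Delta}(s,\chi)-\frac{L'}{L}(1-s,\overline\chi)$ and $L'(s,\chi)=\Delta'(s,\chi)L(1-s,\overline\chi)-\Delta(s,\chi)L'(1-s,\overline\chi)$, use $\frac{\Delta'}{\Delta}(s,\chi)=-\log(\frac{qt}{2\pi})+O(\frac1t)$ from \eqref{eqq1} and the closed form $\Delta(s,\chi)=i\tau(\chi)\chi(-1)(2\pi)^{s-1}q^{-s}\Gamma(1-s)e^{-i\pi s/2}$ recorded in the proof of Lemma \ref{lemma.L'}, and expand $L(1-s,\overline\chi)$, $L'(1-s,\overline\chi)$, and the product $\frac{L'}{L}(1-s,\overline\chi)L'(1-s,\overline\chi)$ into Dirichlet series in $n^{s-1}$. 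The four products in the resulting bracket expansion all reduce to integrals of the shape $\frac{1}{2\pi i}\int\log^{j}(\frac{qt}{2\pi})\,\Gamma(1-s)e^{-i\pi s/2}\big(\frac{2\pi kX}{q}\big)^{s}\,ds$ with $j\in\{0,1,2\}$. Using the Mellin--Barnes identity $\frac{1}{2\pi i}\int_{(c)}\Gamma(w)z^{-w}\,dw=e^{-z}$ after the substitution $w=1-s$, each such integral turns into an exponential $e^{-2\pi ikX/q}$ with the prefactor $\frac{X}{\sqrt q}$ (here $|\tau(\chi)|=\sqrt q$), while a saddle-point analysis localizes it at $t\approx\frac{2\pi kX}{q}$: this simultaneously truncates the sums at $k\le\frac{qT}{2\pi X}$ (the saddle must fall below the top edge) and replaces the amplitude $\log^{j}(\frac{qt}{2\pi})$ by $\log^{j}(Xk)$. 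Expanding $\log^{j}(Xk)=(\log X+\log k)^{j}$ and collecting by powers of $\log k$ yields the four exponential sums of the lemma — the convolution sum arising from the product carrying $\frac{L'}{L}(1-s,\overline\chi)L'(1-s,\overline\chi)$ — and the phase constant $\frac{i\pi}{2}$ in $\psi(s)=\log|t|+\frac{i\pi}{2}+\cdots$ of \eqref{eqL2} is the source of the $\frac{i\pi}{4}$ coefficients. The remaining smooth term $-\Delta(X)\chi[\Delta(X)X]\log X\{\cdots\}$ appears as the non-oscillatory boundary contribution of this analysis at the top of the left edge, present precisely when $X$ is a positive integer so that $\sum_{k}\overline\chi(k)e^{-2\pi ikX/q}$ has a nonzero Gauss-sum mean; consistently with this, the bracket $\frac{T}{4\pi}\log(\frac{qT}{2\pi})-\frac{T}{4\pi}$ is recognized as $\frac{X}{2q}\sum_{k\le qT/2\pi X}\log(Xk)$, and the $\frac{i\pi}{8}$ again comes from the $\frac{i\pi}{2}$ phase.

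The \emph{main obstacle} I anticipate is this left-edge evaluation: carrying out the saddle-point/Mellin--Barnes reduction uniformly in $k$ up to $\frac{qT}{2\pi X}$, keeping all error terms within $O(\sqrt T\log^{3}T)$, and cleanly separating the genuinely oscillatory sums from the smooth resonant main term, all while tracking the constants $\tfrac12$ and $\tfrac{i\pi}{4}$ through the Stirling expansion of $\Gamma(1-s)$ and the logarithmic amplitudes. Once the left edge is in hand, assembling the four segments and matching the prefactors $\frac{X}{\sqrt q}$ and $\Delta(X)\chi[\Delta(X)X]$ is routine bookkeeping.
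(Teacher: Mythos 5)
Your outline reproduces the paper's architecture (residue theorem on the rectangle, Dirichlet series on the right edge, functional equation plus saddle point on the left edge), but your choice of kernel opens a genuine gap. The paper integrates $\frac{\xi'}{\xi}(s,\chi)L'(s,\chi)X^{s}$, not $\frac{L'}{L}(s,\chi)L'(s,\chi)X^{s}$. The two kernels have the same residues inside the contour, since they differ by the archimedean factor $\frac{1}{2}\log\frac{q}{\pi}+\frac{1}{2}\psi\bigl(\frac{s+\nu}{2}\bigr)=\frac{1}{2}\log\frac{qt}{2\pi}+\frac{i\pi}{4}+O(1/t)$ times $L'(s,\chi)X^{s}$, which is holomorphic in the strip above $\mathrm{Im}(s)=1$; but the main terms then land on different edges. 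In the paper it is exactly this archimedean factor, hitting the diagonal term $m=X$ of $L'(s,\chi)=-\sum_{m}\chi(m)\log(m)m^{-s}$ on the right edge, that produces the whole first main term $-\Delta(X)\chi[\Delta(X)X]\log X\bigl\{\frac{T}{4\pi}\log\frac{qT}{2\pi}-\frac{T}{4\pi}+\frac{i\pi}{4}\frac{T}{2\pi}\bigr\}$, including its $\frac{i\pi}{4}$, by a one-line integration. Your right edge yields only the convolution diagonal, so in your setup that term and all the $\frac{i\pi}{4}$'s must be manufactured on the left edge — and your account of how they arise is not correct. By \eqref{eqq1}, $\frac{\Delta'}{\Delta}(s,\chi)=-\log\frac{qt}{2\pi}+O(1/t)$ carries no $i\pi$ constant (the Stirling phase cancels against $e^{-i\pi s/2}$), and $\psi$ simply never enters your kernel, so your appeal to the $\frac{i\pi}{2}$ in \eqref{eqL2} is vacuous for your own argument; moreover the smooth term is not an "endpoint contribution at the top of the left edge" — endpoint terms of the stationary-phase analysis are absorbed into the error.

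To see the gap concretely, expand your left-edge integrand with the functional equation:
\begin{equation*}
\frac{L'}{L}(s,\chi)L'(s,\chi)=\Delta(s,\chi)\left[\Bigl(\frac{\Delta'}{\Delta}(s,\chi)\Bigr)^{2}L\left(1-s,\overline{\chi}\right)-2\frac{\Delta'}{\Delta}(s,\chi)L'\left(1-s,\overline{\chi}\right)+\frac{L'}{L}\left(1-s,\overline{\chi}\right)L'\left(1-s,\overline{\chi}\right)\right].
\end{equation*}
The saddle point at $t_{0}=\frac{2\pi kX}{q}$ replaces each $\log^{j}\frac{qt}{2\pi}$ by $\log^{j}(kX)$, so your pure character sums come out with the weight $\log^{2}(kX)-2\log(k)\log(kX)=\log^{2}X-\log^{2}k$, whereas the statement of Lemma \ref{lem.2} requires the weights $\log^{2}k$, $\frac{1}{2}\log X\log k$, and $-\frac{1}{2}\log^{2}X-\frac{i\pi}{4}\log X$. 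The discrepancy is a multiple of $\log X$, and it is precisely the piece that must be converted into the missing smooth main term: when $X$ is an integer, $k\mapsto\overline{\chi}(k)e^{-2i\pi kX/q}$ is $q$-periodic with nonzero (Gauss-sum) mean, so your sums $\sum_{k\le qT/(2\pi X)}\overline{\chi}(k)\log^{j}(k)e^{-2i\pi kX/q}$ secretly contain smooth pieces of size $T\log^{2}T$ that the lemma keeps separated out; when $X$ is not an integer one must instead show these difference sums fall into the error term. Performing this extraction (a partial-summation/Gauss-sum evaluation of the kind the paper only carries out later, in Lemma \ref{lem.3}, for the case where $X/q$ is an integer) is a genuine additional argument that your proposal does not supply, and without it your route ends at a different identity, not at the displayed formula. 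The clean repair is to adopt the paper's kernel $\frac{\xi'}{\xi}(s,\chi)$: nothing changes inside the contour, the right edge then gives the first main term and the $\frac{i\pi}{4}$'s for free, the left edge gives exactly the four stated sums via the same Gonek--Levinson saddle-point evaluation you describe, and the horizontal edges are handled as you propose via Lemma \ref{lemm} and the $\log^{2}T$ bound.
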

\begin{proof} We apply the same argument used by Fujii in \cite{3}. Let $X$ be a fixed positive number, $s=\sigma+it, t\in \R.$ Suppose
that $T>t_0$, where $t_{0}>0$ is as in equations \eqref{eqL3} or \eqref{eqL4} and $T$ is not an imaginary part of any  zero of   the
Dirichlet $L$-function. We consider
\begin{equation}\label{eqL 1}I=\frac{1}{2 i\pi}\int_{\mathbf{R}}\frac{\xi '}{\xi}(s,\chi)L'(s,\chi)X^s
ds, \end{equation} where $\mathbf{R}$ denotes the counterclockwise
oriented rectangular with vertices $\delta+iC,\ \delta+iT$,\\
$1-\delta+iT$ and $1-\delta+iC,$ with $\delta=1+\frac{1}{\log{T}}$.
First, we have
\begin{eqnarray}
I&&\ =\ \frac{1}{2i\pi}\int_{\delta+iC}^{\delta+iT}\frac{\xi'}{\xi}(s,\chi)L'(s,\chi)X^sds+\frac{1}{2i\pi}\int_{\delta+iT}^{1-\delta+iT}\frac{\xi'}{\xi}(s,\chi)L'(s,\chi)X^sds\nonumber\\
&&\ \ \ +\ \
\frac{1}{2i\pi}\int_{1-\delta+iT}^{1-\delta+iC}\frac{\xi'}{\xi}(s,\chi)L'(s,\chi)X^sds+\frac{1}{2i\pi}\int_{1-\delta+iC}^{\delta+iC}\frac{\xi'}{\xi}(s,\chi)L'(s,\chi)X^sds\nonumber\\
&&\ =\ I_1+I_2+I_3+I_4.\label{eqq2}
\end{eqnarray}
Recall that from our choice of $T$, we have
\begin{equation}\label{eqq3}
\frac{\xi'}{\xi}(\sigma+it,\chi)\ll \log^2 T, \ \ \ \ \
\hbox{for}\ \ \ -1\leq \sigma\leq 2.
\end{equation}
Therefore, by using Lemma \ref{lemm}, we deduce that
\begin{equation}\label{eqq4}
I_2+I_4\ll
\log^{2}T\int_{1-\delta}^{\delta}\left(|L'(\sigma+iC,\chi)+|L'(\sigma+iT,\chi)|\right)X^{\sigma}d\sigma\ll\sqrt{T}\log^3 T.
\end{equation}
Now, we estimate $I_1.$ We have \begin{eqnarray}
I_1&=&\frac{1}{2\pi}\int_C^T\frac{\xi'}{\xi}(\delta+it,\chi)L'(\delta+it,\chi)X^{\delta+it}dt\nonumber\\
&=&
\frac{1}{2\pi}\int_C^T\left\{\frac{1}{2}\log{\left(\frac{q}{\pi}\right)}+\frac{1}{2}\psi\left(\frac{\delta+\nu+it}{2}\right)+\frac{L'}{L}(\delta+it,\chi)\right\}L'(\delta+it,\chi)X^{\delta+it}dt\nonumber
\end{eqnarray}
Formula (\ref{eqL2}) yields to
\begin{eqnarray}
&&I_1\nonumber\\
&=&-\frac{1}{2\pi}\int_C^T\left\{\frac{1}{2}\log\left(\frac{qt}{2\pi}\right)+\frac{i\pi}{4}-\sum_{n=1}^{\infty}\frac{\chi(n)\Lambda(n)}{n^{\delta+it}}+O\left(\frac{1}{t}\right)\right\}\sum_{m=1}^{\infty}\frac{\chi(m)\log{(m)}}{m^{\delta+it}}X^{\delta+it}dt\nonumber\\
&=&-\frac{X^{\delta}}{2\pi}\sum_{m=1}^{\infty}\frac{\chi(m)\log{(m)}}{m^{\delta}}\int_C^T\left\{\frac{1}{2}\log\left(\frac{qt}{2\pi}\right)+\frac{i\pi}{4}+O\left(\frac{1}{t}\right)\right\}\left(\frac{X}{m}\right)^{it}dt\nonumber\\
&&\ \ \ \ \ \ +\ \ \frac{X^{\delta}}{2\pi}\sum_{m=1}^{\infty}\frac{\chi(m)\log{(m)}}{m^{\delta}}\sum_{n=1}^{\infty}\frac{\chi(n)\Lambda(n)}{n^{\delta}}\int_C^T\left(\frac{X}{m n}\right)^{it}dt=\  J_1+J_2\label{eqq5},
\end{eqnarray}
where
\begin{eqnarray}J_1&=&-\frac{X^{\delta}}{2\pi}\sum_{m=1}^{\infty}\frac{\chi(m)\log{(m)}}{m^{\delta}}\int_C^T\left\{\frac{1}{2}\log\left(\frac{qt}{2\pi}\right)+\frac{i\pi}{4}+O\left(\frac{1}{t}\right)\right\}\left(\frac{X}{m}\right)^{it}dt\nonumber\\
&=&-\Delta(X)\frac{\chi\left[\Delta(X)X\right]\log{(X)}}{2\pi}\int_C^T\left\{\frac{1}{2}\log\left(\frac{qt}{2\pi}\right)+\frac{i\pi}{4}\right\}dt\nonumber\\
&&\ \ -\ \ \frac{X^{\delta}}{2\pi}\sum_{\substack{m=1\\ m\neq
X}}^{\infty}\frac{\chi(m)\log(m)}{m^{\delta}}\int_C^T\left\{\frac{1}{2}\log\left(\frac{qt}{2\pi}\right)+\frac{i\pi}{4}\right\}\left(\frac{X}{m}\right)^{it}dt+O\left(\log^3 T\right)\nonumber\\
&=&J_3+J_4+O\left(\log^3 T\right)\nonumber
\end{eqnarray}
with
\begin{eqnarray}
J_3&=&\ -\Delta(X)\frac{\chi\left[\Delta(X)X\right]\log{(X)}}{2\pi}\int_C^T\left\{\frac{1}{2}\log\left(\frac{qt}{2\pi}\right)+\frac{i\pi}{4}\right\}dt\nonumber\\
&=&\ -\Delta(X)\chi\left[\Delta(X)X\right]\log{(X)}\left\{\frac{T}{4\pi}\log{\left(\frac{qT}{2\pi}\right)}-\frac{T}{4\pi}+\frac{i\pi}{4}\frac{T}{2\pi}\right\}+O(1)\nonumber
\end{eqnarray}
and
\begin{eqnarray}
J_4&=&-\ \ \frac{X^{\delta}}{2\pi}\sum_{\substack{m=1\\ m\neq
X}}^{\infty}\frac{\chi(m)\log(m)}{m^{\delta}}\int_C^T\left\{\frac{1}{2}\log\left(\frac{qt}{2\pi}\right)+\frac{i\pi}{4}\right\}\left(\frac{X}{m}\right)^{it}dt\nonumber\\
&\ll& \left\{
                               \begin{array}{ll}
                                X^{\delta}\sum_{\substack{m=1\\ m\neq
X}}^{\infty}\frac{\log m}{m^{\delta}}\min\left(T\log T,\log T\left(1+\left|\log\frac{X}{m}\right|^{-1}\right)\right) & \hbox{if}\ \ X\geq 1, \\
                                 \sum_{m=1}^{\infty}\frac{\log m}{m^{\delta}} &
\hbox{if}\ \ 0<X<1.
                               \end{array}
                             \right.\nonumber
\end{eqnarray}
Hence
$$J_4\ll \left\{
        \begin{array}{ll}
          \log^3{T} & \hbox{if}\ \ X\geq 1 ,\\
          \log{T} & \hbox{if}\ \ 0<X<1.
        \end{array}
      \right.$$
\noindent Therefore
 \begin{equation}\label{eqq6}
J_1=-\Delta(X)\chi\left[\Delta(X)X\right]\log{(X)}\left\{\frac{T}{4\pi}\log{\left(\frac{qT}{2\pi}\right)}-\frac{T}{4\pi}+\frac{i\pi}{4}\frac{T}{2\pi}\right\}+O\left(\log^3{T}\right).
\end{equation}
On the other hand, we have
\begin{eqnarray}
J_2&=&\frac{X^{\delta}}{2\pi}\sum_{k=1}^{\infty}\frac{\chi(k)}{k^{\delta}}\sum_{k=mn}\Lambda(n)\log(m)\int_C^T\left(\frac{X}{k}\right)^{it}dt\nonumber\\
&=&\frac{X^{\delta}}{2\pi}T\sum_{\substack{k=1\\
X=k}}^{\infty}\frac{\chi(k)}{k^{\delta}}\sum_{X=mn}\Lambda(n)\log(m)\nonumber\\
&&\ \ +\ \  \frac{X^{\delta}}{2\pi}\sum_{\substack{k=1\\ k\neq X}}^{\infty}\frac{\chi(k)}{k^{\delta}}\sum_{k=mn}\Lambda(n)\log(m)\int_C^T\left(\frac{X}{k}\right)^{it}dt=J_5+J_6,\nonumber
\end{eqnarray}
where
\begin{equation}\label{eq.j5}J_5=\Delta(X)\frac{T}{2\pi}\chi\left[\Delta(X)X\right]\sum_{X=mn}\Lambda(n)\log(m)+O(1)\end{equation}
and \begin{eqnarray}\label{eq.j6}
J_6&\ll&\left\{              \begin{array}{ll}
               X^{\delta}\sum_{\substack{k=1\\
X\neq k}}\frac{1}{k^{\delta}}\sum_{k=mn}\Lambda(n)\log(m)\min\left(T,\frac{1}{\left|\log{\frac{X}{k}}\right|}\right)  & \hbox{if}\ \ X\geq 1 ,\\
             \sum_{k=1}^{\infty}\frac{1}{k^{\delta}}\sum_{nm=k}\Lambda(n)\log(m)\frac{1}{\log{k}}    & \hbox{if}\ \ 0<X<1,
              \end{array}
            \right.\nonumber\\
&&\ \ \ \ \ll \left\{
        \begin{array}{ll}
          \log^3{T} & \hbox{if}\ \ X\geq 1 ,\\
          \log^2{T} & \hbox{if}\ \ 0<X<1.
        \end{array}
      \right.
\end{eqnarray}
Combining the two last equations \eqref{eq.j5} and \eqref{eq.j6}, we get  \begin{equation}\label{eqq7}
J_2=\Delta(X)\frac{T}{2\pi}\chi\left[\Delta(X)X\right]\sum_{X=mn}\Lambda(n)\log(m)+O\left(\log^3{T}\right).
\end{equation}
Therefore, from (\ref{eqq6}) and
(\ref{eqq7}), we obtain
\begin{eqnarray}
I_1&=&-\Delta(X)\chi\left[\Delta(X)X\right]\log{(X)}\left\{\frac{T}{4\pi}\log{\left(\frac{q T}{2\pi}\right)}-\frac{T}{4\pi}+\frac{i\pi}{4}\frac{T}{2\pi}\right\}\nonumber\\
&&\ \ +\ \
\Delta(X)\frac{T}{2\pi}\chi\left[\Delta(X)X\right]\sum_{X=mn}\Lambda(n)\log(m)+O\left(\log^3{T}\right).\label{eqq8}
\end{eqnarray}
Next, we shall evaluate  $I_3.$ First, we note that
\begin{eqnarray}
L'(s,\chi)&=&\left(\frac{1}{\Delta\left(1-s,\overline{\chi}\right)}L\left(1-s,\overline{\chi}\right)\right)'\nonumber\\
&=&\frac{1}{\Delta\left(1-s,\overline{\chi}\right)}\left(-L'\left(1-s,\overline{\chi}\right)+\frac{\Delta'}{\Delta}\left(1-s,\overline{\chi}\right)L\left(1-s,\overline{\chi}\right)\right).\nonumber
\end{eqnarray}
Moreover, by formula (\ref{eqqL1}), we get $\frac{\xi'}{\xi}(s,\chi)=-\frac{\xi'}{\xi}\left(1-s,\overline{\chi}\right).$ Hence
\begin{eqnarray}I_3&=&-\frac{1}{2i\pi}\int_{1-\delta+iC}^{1-\delta+iT}-\frac{\xi'}{\xi}\left(1-s,\overline{\chi}\right)\left(-L'\left(1-s,\overline{\chi}\right)+\frac{\Delta'}{\Delta}\left(1-s,\overline{\chi}\right)L\left(1-s,\overline{\chi}\right)\right)\frac{X^s}{\Delta\left(1-s,\overline{\chi}\right)}ds\nonumber\\
&=&\frac{1}{2\pi}\int_{C}^{T}\frac{\xi'}{\xi}\left(\delta-it,\overline{\chi}\right)\left(-L'\left(\delta-it,\overline{\chi}\right)+\frac{\Delta'}{\Delta}\left(\delta-it,\overline{\chi}\right)L\left(\delta-it,\overline{\chi}\right)\right)\frac{X^{1-\delta+it}}{\Delta\left(\delta-it,\overline{\chi}\right)}dt.\nonumber
\end{eqnarray}
By complex conjugation, we obtain \begin{eqnarray}\overline{I_3}&=&\frac{X^{1-\delta}}{2\pi}\int_{C}^{T}\frac{\xi'}{\xi}\left(\delta+it,\chi\right)\left(-L'\left(\delta+it,\chi\right)+\frac{\Delta'}{\Delta}\left(\delta+it,\chi\right)L\left(\delta+it,\chi\right)\right)\frac{X^{-it}}{\Delta\left(\delta+it,\chi\right)}dt\nonumber\\
&=&\frac{X^{1-\delta}}{2\pi}\int_{C}^{T}\left\{\frac{1}{2}\log\left(\frac{qt}{2\pi}\right)+\frac{i\pi}{4}
-\sum_{n=1}^{\infty}\frac{\chi(n)\Lambda(n)}{n^{\delta+it}}+O\left(\frac{\log t}{t}\right)\right\}\times\nonumber\\
&&\ \ \times\ \
\left\{\sum_{m=1}^{\infty}\frac{\chi(m)\log{(m)}}{m^{\delta+it}}-\log\left(\frac{qt}{2\pi}\right)\sum_{m=1}^{\infty}\frac{\chi(m)}{m^{\delta+it}}+O\left(\frac{\log t}{t}\right)\right\}\frac{X^{-it}}{\Delta(\delta+it,\chi)}dt.\nonumber
\end{eqnarray}
Let us write $\overline{I_3}$ as follows
\begin{eqnarray}
&&\overline{I_3}=\frac{X^{1-\delta}}{2\pi}\int_{C}^{T}\frac{1}{2}\log{\left(\frac{q
t}{2\pi}\right)}\sum_{m=1}^{\infty}\frac{\chi(m)\log{(m)}}{m^{\delta+it}}\left(\frac{qt}{2\pi}\right)^{\delta-\frac{1}{2}}e^{-\frac{i\pi}{4}}e^{it\log\left(\frac{qt}{2\pi
e X}\right)}dt\nonumber\\
&&-\frac{X^{1-\delta}}{2\pi}\int_{C}^{T}\frac{1}{2}\log^2{\left(\frac{q
t}{2\pi}\right)}\sum_{m=1}^{\infty}\frac{\chi(m)}{m^{\delta+it}}\left(\frac{qt}{2\pi}\right)^{\delta-\frac{1}{2}}e^{-\frac{i\pi}{4}}e^{it\log\left(\frac{qt}{2\pi
e X}\right)}dt\nonumber\\
&&+\frac{X^{1-\delta}}{2\pi}\int_{C}^{T}\frac{i\pi}{4}\sum_{m=1}^{\infty}\frac{\chi(m)\log{(m)}}{m^{\delta+it}}\left(\frac{qt}{2\pi}\right)^{\delta-\frac{1}{2}}e^{-\frac{i\pi}{4}}e^{it\log\left(\frac{qt}{2\pi
e X}\right)}dt\nonumber\\
&&-\frac{X^{1-\delta}}{2\pi}\int_{C}^{T}\frac{i\pi}{4}\log{\left(\frac{q
t}{2\pi}\right)}\sum_{m=1}^{\infty}\frac{\chi(m)}{m^{\delta+it}}\left(\frac{qt}{2\pi}\right)^{\delta-\frac{1}{2}}e^{-\frac{i\pi}{4}}e^{it\log\left(\frac{qt}{2\pi
e X}\right)}dt\nonumber\\
&&-\frac{X^{1-\delta}}{2\pi}\int_{C}^{T}\sum_{n=1}^{\infty}\frac{\chi(n)\Lambda(n)}{n^{\delta+it}}\sum_{m=1}^{\infty}\frac{\chi(m)\log(m)}{m^{\delta+it}}\left(\frac{qt}{2\pi}\right)^{\delta-\frac{1}{2}}e^{-\frac{i\pi}{4}}e^{it\log\left(\frac{qt}{2\pi
e X}\right)}dt\nonumber\\
&&+\frac{X^{1-\delta}}{2\pi}\int_{C}^{T}\sum_{n=1}^{\infty}\frac{\chi(n)\Lambda(n)}{n^{\delta+it}}\sum_{m=1}^{\infty}\frac{\chi(m)}{m^{\delta+it}}\log{\left(\frac{qt}{2\pi}\right)}\left(\frac{qt}{2\pi}\right)^{\delta-\frac{1}{2}}e^{-\frac{i\pi}{4}}e^{it\log\left(\frac{qt}{2\pi
e X}\right)}dt  +O\left(\log^3{T}\right)\nonumber\\
&&=H_1+H_2+H_3+H_4+H_5+H_6+O\left(\log^3{T}\right)\nonumber
\end{eqnarray}
Each of the above integrals will be  evaluated by the  method used by Gonek \cite[Lemma 5, page 131]{9} or Levinson \cite{15}.
First, we have   \footnote{The estimation of $H_{1}$ is based on the calculation of the integral $j_{m}$
$$j_{m}=\int_{C}^{T}g(t)e^{i2\pi f(t)}dt,\ g(t)=  \left(\frac{qt}{2\pi}\right)^{\delta-\frac{1}{2}},\ f(t)=\frac{t}{2\pi}\log\left(\frac{qt}{2\pi e X m}\right).$$
The saddle point $t_{0}=\frac{2\pi Xm}{q}$ belongs to the segment $C\leq t\leq T$ for $m\leq \frac{qt}{2\pi X}$. For such $m$, the main term of the asymptotic formula of  $j_{m}$ has the form
$$e^{i\pi/4}\frac{g(t_{0})e^{2\pi f(t_{0})}}{\sqrt{f''(t_{0})}}=2\pi  e^{i\pi/4} \frac{(Xm)^{\delta}}{\sqrt{q}}\log(mX)e^{-2\pi iXm/q}.$$
}
\begin{eqnarray}
H_1&=&\frac{X^{1-\delta}}{2\pi}\frac{1}{2}\sum_{m=1}^{\infty}\frac{\chi(m)\log{(m)}}{m^{\delta}}\int_{C}^{T}\log{\left(\frac{q
t}{2\pi}\right)}\left(\frac{qt}{2\pi}\right)^{\delta-\frac{1}{2}}e^{-\frac{i\pi}{4}}e^{it\log\left(\frac{qt}{2\pi
e X m}\right)}dt\nonumber\\
&=&\frac{X}{2\sqrt{q}}\sum_{m\leq \frac{q T}{2\pi X}}\chi(m)\log(m)\log(m
X)e^{-2i\pi m X/q}+O\left(\sqrt{T}\log^3{T}\right)\nonumber\\
&=&\frac{X}{2\sqrt{q}}\sum_{m\leq \frac{q T}{2\pi
X}}\chi(m)\log^2(m)e^{-2i\pi m X/q}+\frac{X\log(X)}{2\sqrt{q}}\sum_{m\leq
\frac{q T}{2\pi X}}\chi(m)\log(m)e^{-2i\pi m X/q}\nonumber\\
&&\ \ +\ \ O\left(\sqrt{T}\log^3{T}\right).\nonumber
\end{eqnarray}
To estimate $H_{2}$ we proceed as follows
\begin{eqnarray}
H_2&=&-
\frac{1}{2}\frac{X^{1-\delta}}{2\pi}\sum_{m=1}^{\infty}\frac{\chi(m)}{m^{\delta}}\int_{C}^{T}\log^2{\left(\frac{q
t}{2\pi}\right)}\left(\frac{qt}{2\pi}\right)^{\delta-\frac{1}{2}}e^{-\frac{i\pi}{4}}e^{it\log\left(\frac{qt}{2\pi
e X m}\right)}dt\nonumber\\
&=&-\frac{X}{2\sqrt{q}}\sum_{m\leq \frac{q T}{2\pi X}}\chi(m)\log^2(m
X)e^{-2i\pi m X/q}+O\left(\sqrt{T}\log^3{T}\right)\nonumber\\
&=&-\frac{X}{2\sqrt{q}}\sum_{m\leq \frac{q T}{2\pi
X}}\chi(m)\log^2(m)e^{-2i\pi m X/q}-\frac{X\log^2{(X)}}{2\sqrt{q}}\sum_{m\leq
\frac{q T}{2\pi X}}\chi(m)e^{-2i\pi m X/q}\nonumber\\
&&\ \ -\ \ \frac{X\log{(X)}}{\sqrt{q}}\sum_{m\leq \frac{q T}{2\pi
X}}\chi(m)\log(m)e^{-2i\pi mX/q}+O\left(\sqrt{T}\log^3{T}\right).\nonumber
\end{eqnarray}
For $H_{3}$, we have
\begin{eqnarray}
H_3&=&\frac{X^{1-\delta}}{2\pi}\frac{i\pi}{4}\sum_{m=1}^{\infty}\frac{\chi(m)\log{(m)}}{m^{\delta}}\int_{C}^{T}\left(\frac{qt}{2\pi}\right)^{\delta-\frac{1}{2}}e^{-\frac{i\pi}{4}}e^{it\log\left(\frac{qt}{2\pi
e X m}\right)}dt\nonumber\\
&=&\frac{i\pi}{4}\frac{X}{\sqrt{q}}\sum_{m\leq \frac{q T}{2\pi
X}}\chi(m)\log(m)e^{-2i\pi mX/q}+O\left(\sqrt{T}\log^3{T}\right).\nonumber
\end{eqnarray}
Similarly, we obtain
\begin{eqnarray}
H_4&=&-\frac{X^{1-\delta}}{2\pi}\frac{i\pi}{4}\sum_{m=1}^{\infty}\frac{\chi(m)}{m^{\delta}}\int_{C}^{T}\log{\left(\frac{q
t}{2\pi}\right)}\left(\frac{qt}{2\pi}\right)^{\delta-\frac{1}{2}}e^{-\frac{i\pi}{4}}e^{it\log\left(\frac{qt}{2\pi
e X m}\right)}dt\nonumber\\
&=&-\frac{i\pi}{4}\frac{X\log{X}}{\sqrt{q}}\sum_{m\leq \frac{q T}{2\pi
X}}\chi(m)e^{-2i\pi m X/q}-\frac{i\pi}{4}X\sum_{m\leq \frac{q T}{2\pi
X}}\chi(m)\log(m)e^{-2i\pi m X/q}\nonumber\\
&&\ \ -\ \ \ O\left(\sqrt{T}\log^3{T}\right).\nonumber
\end{eqnarray}
For $H_{5}$, one has
\begin{eqnarray}
H_5&=&-\frac{X^{1-\delta}}{2\pi}\sum_{n=1}^{\infty}\frac{\chi(n)\Lambda(n)}{n^{\delta}}\sum_{m=1}^{\infty}\frac{\chi(m)\log(m)}{m^{\delta}}\int_{C}^{T}\left(\frac{qt}{2\pi}\right)^{\delta-\frac{1}{2}}e^{-\frac{i\pi}{4}}e^{it\log\left(\frac{qt}{2\pi
e X m n}\right)}dt\nonumber\\
&=&-\frac{X}{\sqrt{q}}\sum_{k\leq \frac{q T}{2\pi
X}}\sum_{k=mn}\Lambda(n)\chi(n)\chi(m)\log(m)e^{-2i\pi kX/q}+O\left(\sqrt{T}\log^3{T}\right).\nonumber
\end{eqnarray}
Finally, for $H_{6}$ we get
\begin{eqnarray}
H_6&=&\frac{X^{1-\delta}}{2\pi}\sum_{n=1}^{\infty}\frac{\chi(n)\Lambda(n)}{n^{\delta}}\sum_{m=1}^{\infty}\frac{\chi(m)}{m^{\delta}}\int_{C}^{T}\log{\left(\frac{qt}{2\pi}\right)}\left(\frac{qt}{2\pi}\right)^{\delta-\frac{1}{2}}e^{-\frac{i\pi}{4}}e^{it\log\left(\frac{qt}{2\pi
e X m n}\right)}dt\nonumber\\
&=&\frac{X\log{X}}{\sqrt{q}}\sum_{k\leq \frac{q T}{2\pi
X}}\sum_{k=mn}\Lambda(n)\chi(n)\chi(m)\log(k)e^{-2i\pi k
X/q}+O\left(\sqrt{T}\log^3{T}\right).\nonumber\\
&=&\frac{X}{\sqrt{q}}\sum_{k\leq \frac{q T}{2\pi X}}\chi(k)\log^2(k)e^{-2i\pi k
X/q}+\frac{X\log{X}}{\sqrt{q}}\sum_{k\leq \frac{q T}{2\pi X}}\chi(k)\log(k)e^{-2i\pi k
X/q}\nonumber\\ &&\ \ +\ \ O\left(\sqrt{T}\log^3{T}\right).\nonumber
\end{eqnarray}
Collecting together the above results on $H_{1},.., H_{5}$ and $H_{6}$, we obtain
\begin{eqnarray}  \label{eqq9}
I_3&=&\frac{X}{\sqrt{q}}\sum_{k\leq \frac{q T}{2\pi
X}}\log^2(k)\overline{\chi}(k)e^{2i\pi k X/q}+\frac{X\log(X)}{2\sqrt{q}}\sum_{k\leq \frac{q T}{2\pi
X}}\log(k)\overline{\chi}(k)e^{2i\pi k X/q}\nonumber\\ &&\ \ -\ \
\left(\frac{X\log^2(X)}{2\sqrt{q}}-\frac{i\pi}{4}\frac{X\log{X}}{\sqrt{q}}\right)\sum_{k\leq
\frac{q T}{2\pi X}}\overline{\chi}(k)e^{2i\pi k X/q}\nonumber\\
&&\ \ -\ \ \frac{X}{\sqrt{q}}\sum_{k\leq \frac{q T}{2\pi
X}}\sum_{k=mn}\Lambda(n)\overline{\chi}(n)\overline{\chi}(m)\log(m)e^{2i\pi k X/q}+ O\left(\sqrt{T}\log^3{T}\right).
\end{eqnarray}
Finally, by using equations (\ref{eqq8}) and (\ref{eqq9}),
we  finish the proof of Lemma \ref{lem.2}.
\end{proof}
\section{Proof of Theorem \ref{theorem}}
 Let $X$ be a fixed positive
real number and $a$ be a complex number. We write $s=\sigma+it,\ \
\rho_{a,\chi}=\beta_{a,\chi}+i\gamma_{a,\chi}$ with real
$\sigma,t,\beta_{a,\chi}$ and $\gamma_{a,\chi}.$ By the theorem of residues (or Cauchy's theorem), we get
\begin{equation}\label{eq4}\sum_{\rho_{a,\chi};\ 0<\gamma_{a,\chi}\leq
T}L'\left(\rho_{a,\chi},\chi\right)X^{\rho_{a,\chi}}=\frac{1}{2i\pi}\int_\mathbf{R}\frac{L'^{2}(s,\chi)}{L(s,\chi)-a}X^sds,\end{equation}
where the integration is taken over a rectangular contour in
counterclockwise direction denoted by $\mathbf{R}$ according to the
location of the nontrivial $a$-points of $L(s,\chi)$ which will be
specified below.  In view of  formula (\ref{eq T2 }), the
ordinates of the $a$-points cannot lie too dense. For any large
$T_0\geq 0,$ we can find a real number $T\in \left[T_0,T_0+1\right[$
such that $\min_{\rho_{a,\chi}}\left|T-\gamma_{a,\chi}\right|\gg\frac{1}{\log{T}}.$
We shall distinguish the case $a\neq1$ and $a=1$. Let us suppose that $a\neq1$. We may choose the
counterclockwise oriented rectangular $\mathbf{R}$ with vertices
$1-b+i,\,B+i,\,B+iT,\,1-b+iT$, where $B$ is a large  constant which will be chosen below and
$b=1+\frac{1}{\log{T}},$ at the expense of a small error for
disregarding  at most finitely many nontrivial $a$-points below
$Im(s)=1$ and for counting finitely many trivial $a$-points to the
right of $Re(s)=1-b.$ Then, we have $$\sum_{\rho_{a,\chi};\ 0<\gamma_{a,\chi}\leq
T}L'\left(\rho_{a,\chi},\chi\right)X^{\rho_{a,\chi}}=\frac{1}{2i\pi}\int_{\mathbf{R}}\frac{L'^{2}(s,\chi)}{L(s,\chi)-a}X^sds+O(1).$$ Hence
\begin{eqnarray*}
\sum_{\rho_{a,\chi};\ 0<\gamma_{a,\chi}\leq
T}&L'\left(\rho_{a,\chi},\chi\right)&X^{\rho_{a,\chi}}=\frac{1}{2i\pi}\left\{\int_{B+i}^{B+iT}+\int_{B+iT}^{1-b+iT}+\int_{1-b+iT}^{1-b+i}\right\}\frac{L'^{2}(s,\chi)}{L(s,\chi)-a}X^{s}ds\\
&&+\frac{1}{2i\pi}\int_{1-b+i}^{B+i}\frac{L'^{2}(s,\chi)}{L(s,\chi)-a}X^{s}ds
+O(1)=I_1+I_2+I_3+I_4+O(1).
\end{eqnarray*}
It is easy to see from equation \eqref{eq.l'/l} that $I_2,I_4\ll T^{\frac{1}{2}+\epsilon}.$  Now, let us estimate the two integrals $I_1$ and $I_3$. Recall that,  for $\sigma\longrightarrow \infty$, we have $L(s,\chi)=1+o(1)$ and $ L'(s,\chi)\ll 2^{-\sigma} $ uniformly in $t.$ Hence, there are no $a$-points for sufficiently large $\sigma$ provided that $a\neq1$. For the case $a=1$, we define $m=\min\{n\geq2;\chi(n)\neq0\}$. We observe, for $\sigma\longrightarrow +\infty$, $L(s,\chi)-1=\frac{\chi(m)}{m^{\sigma+it}}(1+o(1))$. Hence, in both cases, $a\neq0$ or $a=1$, we choose $B$ a fixed constant sufficiently large such that there are no $a$-points of $L(s,\chi)$ in the half-plane $\sigma> B-1$ (see \cite[Equations (20) and (21)]{8}). Therefore,   we deduce that
$$\int_{B}^{B+iT}\frac{L'^{2}(s,\chi)}{L(s,\chi)-a}X^{s}ds\ll X^B T^{-2\log{2}}\log{T}\ll T^{-2\log{2}}\log{T}.$$
Then $$\sum_{\rho_{a,\chi};\ 0<\gamma_{a,\chi}\leq
T}L'\left(\rho_{a,\chi},\chi\right)X^{\rho_{a,\chi}}=-\frac{1}{2i\pi}\int_{1-b}^{1-b+iT}\frac{L'^{2}(s,\chi)}{L(s,\chi)-a}X^{s}ds+O\left(T^{\frac{1}{2}+\epsilon}\right).$$   It remains to evaluate the integral over the left vertical line segment
$[1-b,1-b+iT]$ of the rectangular $\mathbf{R}.$ Using the same argument as above, for $s\in{[1-b+iA,1-b+iT]}$ where $A=A(a)>0,$ we get the geometric series
expansion
$$\frac{L'(s,\chi)}{L(s,\chi)-a}=\frac{L'}{L}\left(s,\chi\right)\left\{1+\frac{a}{L\left(s,\chi\right)}+\sum_{k=2}^{\infty}\left(\frac{a}{L(s,\chi)}\right)^k\right\}.$$
 Since integration over $[1-b+i,1-b+iA]$ yields to a bounded error term,
the integral $I_3$ becomes
\begin{eqnarray*}&&I_3\nonumber\\
&=&\frac{1}{2i\pi}\int_{1-b+iT}^{1-b+iA}\left\{\frac{L'^2}{L}(s,\chi)X^s+a\left(\frac{L'}{L}(s,\chi)\right)^2X^s+\frac{L'^2}{L}(s,\chi)\sum_{k=2}^{+\infty}\left(\frac{a}{L(s,\chi)}\right)^{k}X^s\right\}ds+O(1)\\
&=&J_1+J_2+J_3+O(1).
\end{eqnarray*}
In order to estimate the third integral $J_3,$ we use equations \eqref{eq.moment1} and \eqref{eq.daven} (similar computation was done in \cite[integral ${\mathcal J}_{3}$ page 30]{8}) to obtain
\begin{eqnarray*}J_3&=&-\frac{aX^{1-b}}{2\pi}\int_{A}^{T}\left(\frac{L'}{L}(1-b+it,\chi)\right)^2\sum_{l=1}^{+\infty}\left(\frac{a}{L(1-b+it,\chi)}\right)^lX^{it}dt\\
&\ll& T^{\frac{1}{2}+\epsilon},\ \ \ \epsilon> 0.
\end{eqnarray*}
Next, let us consider the integral $J_2.$ By the  functional equation satisfied by $L(s,\chi)$, we write the integral
$J_2$ as  \begin{eqnarray*}
J_2&=&-\frac{a}{2i\pi}\int_{1-b+iA}^{1-b+iT}\left(\frac{\Delta'}{\Delta}(s,\chi)-\frac{L'}{L}\left(1-s,\overline{\chi}\right)\right)^2X^sds\\
&=&-\frac{a}{2i\pi}\int_{1-b+iA}^{1-b+iT}\left\{\left(\frac{\Delta'}{\Delta}(s,\chi)\right)^2-2\frac{\Delta'}{\Delta}(s,\chi)\frac{L'}{L}\left(1-s,\overline{\chi}\right)+\left(\frac{L'}{L}\left(1-s,\overline{\chi}\right)\right)^2\right\}X^sds\\
&=&N_1+N_2+N_3.
\end{eqnarray*}
We have
\begin{eqnarray*}
N_1&=&-\frac{a}{2\pi}X^{1-b}\int_A^T\left(-\log\left(\frac{qt}{2\pi}\right)+O\left(\frac{1}{t}\right)\right)^2X^{it}dt\\
&=&-\frac{a}{2\pi}X^{1-b}\int_A^T\left(\log^2\left(\frac{qt}{2\pi}\right)+O\left(\frac{\log\left(\frac{qt}{2\pi}\right)}{t}\right)+O\left(\frac{1}{t}\right)\right)X^{it}dt\\
&=&-\frac{aT}{2\pi}\log^{2}\frac{q
T}{2\pi}+\frac{aT}{\pi}\log\frac{q
T}{2\pi}-\frac{aT}{\pi}+O\left(\log^{2}T\right).
\end{eqnarray*}
Furthermore
\begin{eqnarray*}
N_2&=&\frac{a}{i\pi}\int_{1-b+iA}^{1-b+iT}\frac{\Delta'}{\Delta}(s,\chi)\frac{L'}{L}\left(1-s,\overline{\chi}\right)X^sds\\
&=&\frac{a}{\pi}\int_{A}^{T}\frac{\Delta'}{\Delta}(1-b+it,\chi)\frac{L'}{L}\left(b-it,\overline{\chi}\right)X^{1-b+it}dt\\
&=&\frac{a}{\pi}\int_{A}^{T}\left(\log\left(\frac{qt}{2\pi}\right)+O\left(\frac{1}{t}\right)\right)\sum_{n=2}^{\infty}\frac{\overline{\Lambda}(n)\overline{\chi}(n)}{n^{b-it}}X^{1-b+it}dt\\
&=&\frac{a}{\pi}X^{1-b}\sum_{n=2}^{\infty}\frac{\overline{\Lambda}(n)\overline{\chi}(n)}{n^{b}}\int_{A}^T\left(\log\left(\frac{qt}{2\pi}\right)+O\left(\frac{1}{t}\right)\right)e^{it\log\left({Xn}\right)}dt.
\end{eqnarray*}
An integration by parts yields to
$$\int_{A}^T\left(\log\left(\frac{qt}{2\pi}\right)+O\left(\frac{1}{t}\right)\right)e^{it\log\left({Xn}\right)}dt=O\left(\log T\right).$$
So  $$N_2\ll
\log{T}\sum_{n=2}^{\infty}\frac{\overline{\Lambda(n)\chi(n)}}{n^{b}}X^{1-b}\ll\log{T}\left|\frac{L'}{L}\left(b,\overline{\chi}\right)\right|X^{1-b}\ll
\log^3 T.$$
The same argument gives
\begin{eqnarray*}
N_3&=&-\frac{a}{2i\pi}\int_{1-b+iA}^{1-b+iT}\left(\frac{L'}{L}\left(1-s,\overline{\chi}\right)\right)^2X^sds   \nonumber\\
&=&-\frac{a}{2\pi}\int_{A}^{T}\left(\frac{L'}{L}\left(b-it,\overline{\chi}\right)\right)^2X^{1-b+it}dt\\
&=&-\frac{a}{2\pi}X^{1-b}\sum_{m,n=2}^{+\infty}\frac{\overline{\Lambda}(m)\overline{\Lambda}(n)}{(m
n)^b}\int_A^Te^{it\log(Xmn)}dt \\
&\ll&    \sum_{m,n=2}^{+\infty}\frac{\overline{\Lambda}(m)\overline{\Lambda}(n)}{(m
n)^b}\left|\int_A^Te^{it\log(Xmn)}\right|dt\ll \log^2{T}.
\end{eqnarray*}
From the above estimations of $N_1,\ N_2$ and $N_3$, we obtain $$J_2=-\frac{aT}{2\pi}\log^{2}\frac{q
T}{2\pi}+\frac{aT}{\pi}\log\frac{q
T}{2\pi}-\frac{aT}{\pi}+O\left(\log^{3}T\right).$$
Finally,  to end the proof of Theorem \ref{theorem}, it remains to evaluate $J_{1}$ given by
$$J_{1}=\frac{1}{2i\pi}\int_{1-b+iT}^{1-b+iA}\frac{L'^2}{L}(s,\chi)X^sds$$
which can be evaluated, firstly, up to an error term as a contour integral
$$J_{1}=\frac{1}{2i\pi}\int_{R}\frac{L'^2}{L}(s,\chi)X^sds+O\left(T^{1/2+\epsilon}\right)$$
and, secondly, as a sum of residues
$$J_{1}=\sum_{0<\gamma_{a,\chi}<T}L'(\rho_{\chi},\chi)X^{\rho_{\chi}}+O\left(T^{1/2+\epsilon}\right),$$
where $\rho_{\chi}=\beta_{\chi}+i\gamma_{\chi}$ stands for nontrivial zeros of $L(s,\chi)$. To finish the proof of Theorem \ref{theorem} for $a\neq1$, we note that the last sum of residues was evaluated in Lemma \ref{lem.2}.  \\

For $a=1$, we consider the function $l(s)=q^{s}(L(s,\chi)-1)$ in place of $L(s,\chi)-a$. Furthermore, we have $\frac{l'}{l}(s)=\log q+\frac{L'(s,\chi)}{L(s,\chi)-1}.$ This implies that the constant term does not contribute by integration over a closed contour and we use the same argument as in the case $a\neq1$.

\section{Concluding remarks}
We believe that we can extend our result to higher derivatives of $L(s,\chi)$. The $a$-points of an $L$-function $L(s)$   are the roots of the equation $L(s) = a$. We refer to Steuding book \cite[chapter 7]{21} for some results about $a$-points of  $L$-functions from the Selberg class. Therefore, it is an interesting question to extend Theorem \ref{theorem} to other classes of Dirichlet $L$-functions (the Selberg class with some further conditions) and its higher derivatives and to refine the error term in Theorem \ref{theorem} under the Riemann hypothesis. These problems will be considered in a sequel to this article.\\

\noindent  {\bf Acknowledgement.}  We are thankful to the anonymous referee for his/her valuable suggestions which helped us to improve  the paper.

\end{document}